\newtheorem{theorem}{Theorem}[section]
\newtheorem{proposition}[theorem]{Proposition}
\newtheorem{corollary}[theorem]{Corollary}
\newtheorem{claim}[theorem]{Claim}
\newtheorem{example}[theorem]{Example}
\begin{document}
\title[DELTA-UNKNOTTING NUMBER FOR TWO-BRIDGE KNOTS]{DELTA-UNKNOTTING NUMBER FOR TWO-BRIDGE KNOTS}
\author{Kazumichi Nakamura}
\email{mi-ka-07130703@ezweb.ne.jp}





\begin{abstract}
The $\Delta$-unknotting number for a knot is defined as the minimum number of $\Delta$-moves needed to deform the knot into the trivial knot. We determine the $\Delta$-unknotting numbers for two-bridge knots of type
$C(2\beta_1, 2\beta_2, ... ,  2\beta_n)$ and type $C(2\beta_1, 2\beta_2, ... , 2\beta_{n-1}, 2\beta_n-1)$, where $\beta_i$ is a positive integer for $1 \leq i \leq n$.
We also discuss two-bridge knots whose $\Delta$-unknotting number is equal to one.
\end{abstract}

\maketitle

\noindent\textbf{keywords:}$\Delta$-move, $\Delta$-unknotting number, $\Delta$-Gordian distance, Conway polynomial, Conway's normal form, two-bridge knot, torus knot, linking number.

\section{Introduction}
In this paper, we study the $\Delta$-unknotting number for two-bridge knots.

In \cite{MaN}, H. Murakami and Y. Nakanishi introduced a local move on regular diagrams of oriented knots and links, called a $\Delta$-move (or $\Delta$-unknotting operation), as illustrated in Figure \ref{fig:delta}.
\begin{figure}[htbp]
 \centering    \includegraphics[width=0.8\linewidth]
    {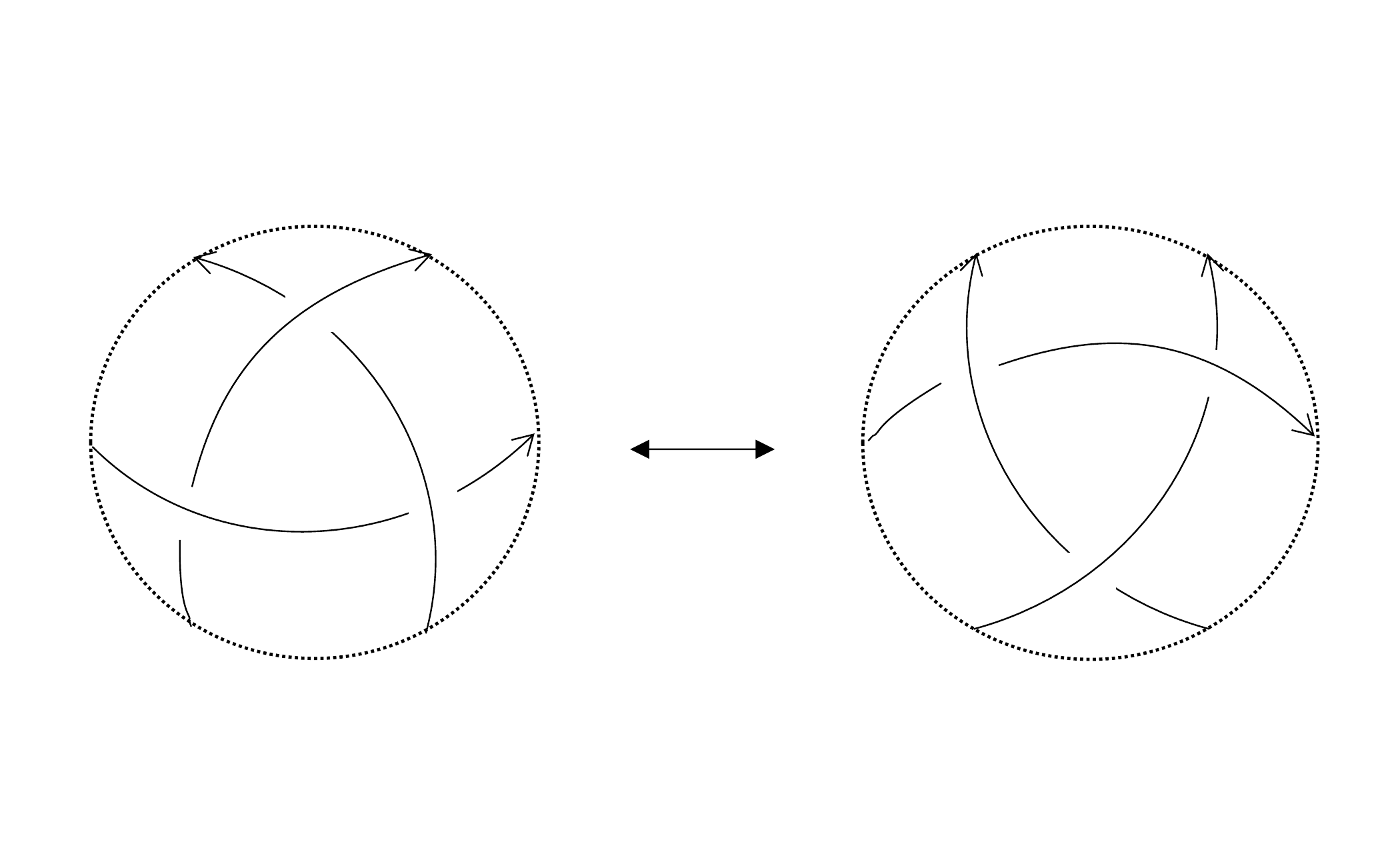}
 \caption{}
 \label{fig:delta}
\end{figure}
They proved that two knots can be deformed into each other by a finite sequence of $\Delta$-moves. 
The $\Delta$-Gordian distance $d_G^{\Delta}(K, K^{'})$ of two oriented knots $K$ and $K^{'}$ is defined as the minimum number of $\Delta$-moves needed to deform a diagram of $K$ into that of $K{'}$. 
The $\Delta$-unknotting number $u^{\Delta}(K)$  of an oriented knot $K$ is defined as the $\Delta$-Gordian distance $d_G^{\Delta}(K, O)$ of $K$ and the trivial knot $O$.

In \cite{Oka1}, M. Okada proved that $u^{\Delta}(K) \geq |a_2(K)|$, where $a_2(K)$ denotes the second coefficient of the Conway polynomial of $K$. 
She also constructed a table of the $\Delta$-unknotting numbers of all prime knots with nine or fewer crossings \cite{Oka1,Oka2}.
Subsequently, in \cite{Naka1,Naka2}, we determined the $\Delta$-unknotting numbers for all prime knots with ten crossings, except for 14 knots. 
While constructing this table, we observed that for many knots (such as torus knots), the $\Delta$-unknotting number is equal to the second coefficient of their Conway polynomials.

In \cite{NaNaU}, we determined the $\Delta$-unknotting numbers for certain classes of oriented knots: torus knots, positive pretzel knots, and positive 3-braids.
For these classes, the $\Delta$-unknotting number coincides with the second coefficient of the Conway polynomial. In particular, the $\Delta$-unknotting number for $T(p,q)$ (the torus knot of type $(p,q)$) is equal to $(p^2-1)(q^2-1)/24$.

However, the $\Delta$-unknotting numbers for the remaining 14 knots have not yet been determined.  
According to \cite{Uchi}, the $\Delta$-unknotting number of $9_{29}$ is now regarded as undetermined due to an error in the diagram illustrating the unknotting procedure in a previous study.
At present, the exact values for some two-bridge knots remain unknown.
Specifically, $u^{\Delta}(10_{14}) = 2$ or $4$, $u^{\Delta}(10_{30}) = 1$ or $3$, $u^{\Delta}(10_{36}) = 1$ or $3$, and $u^{\Delta}(10_{38}) = 1$ or $3$.

\vspace{1em}
As a starting point for resolving these cases, in this paper, we consider fundamental families of two-bridge knots.

In Section \ref{sec3}, we determine the $\Delta$-unknotting numbers for two-bridge knots of type 
$C(2\beta_1, 2\beta_2, ... ,  2\beta_n)$ and type $C(2\beta_1, 2\beta_2, ... , 2\beta_{n-1}, 2\beta_n-1)$, where $\beta_i$ is a positive integer for $1 \leq i \leq n$ (Theorems \ref{thm:even} and \ref{thm:odd}).

\begin{restatable}{theorem}{even} \label{thm:even}
Let $K=C(\alpha_1, \alpha_2, ... , \alpha_n)$ be a two-bridge knot, where $\alpha_i$ is a positive even integer for $1 \leq i \leq n$ and $n$ is even.
Then, we have 
\begin{equation*}
u^{\Delta}(K) = 
\frac{1}{4} \sum_{j=1}^{n/2} \sum_{i=1}^{j} \alpha_{2i-1}\alpha_{2j} \quad
\big(= -a_2(K)\big).
\end{equation*}
\end{restatable}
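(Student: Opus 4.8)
The plan is to prove the equality by squeezing $u^{\Delta}(K)$ between matching lower and upper bounds whose common value is $-a_2(K)=\frac14\sum_{j=1}^{n/2}\sum_{i=1}^{j}\alpha_{2i-1}\alpha_{2j}$. The lower bound is essentially free: Okada's inequality $u^{\Delta}(K)\ge |a_2(K)|$ reduces the matter to computing the second Conway coefficient and checking that it is negative, so that $|a_2(K)|=-a_2(K)$ equals the asserted sum. The substantive half is the upper bound: I must exhibit a sequence of exactly $-a_2(K)$ many $\Delta$-moves deforming $K$ into $O$. Since a single $\Delta$-move changes $a_2$ by at most $\pm 1$, any such sequence is automatically optimal, and its existence forces the claimed equality.

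First I would compute $a_2(K)$. I would apply the Conway skein relation $\nabla_{L_+}-\nabla_{L_-}=z\,\nabla_{L_0}$ inside the last twist region, which consists of $\alpha_n=2\beta_n$ crossings. Switching a crossing there relates $C(\alpha_1,\dots,\alpha_n)$ to $C(\alpha_1,\dots,\alpha_n-2)$, while the oriented smoothing caps the region off and produces a two-component link $L_0$. Comparing $z^2$-coefficients gives
\[
a_2\bigl(C(\alpha_1,\dots,\alpha_n)\bigr)-a_2\bigl(C(\alpha_1,\dots,\alpha_n-2)\bigr)=\pm\,\mathrm{lk}(L_0),
\]
and the linking number on the right is itself a sum of the odd-indexed half-twist counts. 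Iterating this recursion down the continued fraction collapses to the closed form $-\frac14\sum_{j=1}^{n/2}\sum_{i=1}^{j}\alpha_{2i-1}\alpha_{2j}$. Equivalently one may read the same value off the $2\times2$ principal minors of the Seifert matrix of the standard plumbing surface attached to the even Conway normal form. Either route is a routine, if lengthy, induction on $n$.

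For the upper bound I would induct on the (even) length $n$, reducing $C(\alpha_1,\dots,\alpha_n)$ to $C(\alpha_1,\dots,\alpha_{n-2})$ by a block of $\Delta$-moves whose number equals the $j=n/2$ part of the double sum, namely $\beta_n\bigl(\beta_1+\beta_3+\cdots+\beta_{n-1}\bigr)$. This is precisely the drop in $-a_2(K)$ caused by deleting the last two twist regions, so the counts telescope, and the base case $n=2$ is the double twist knot $C(2\beta_1,2\beta_2)$, which must be undone in $\beta_1\beta_2$ moves (consistent with $u^{\Delta}(4_1)=1$). Concretely I would present the final region as $\beta_n$ full twists running alongside the strands emanating from the odd-indexed regions $\alpha_1,\alpha_3,\dots,\alpha_{n-1}$ and remove these clasps one at a time, each elementary reduction realized by a single $\Delta$-move that lowers $a_2$ by exactly one; once the block is exhausted the last two regions cancel and the diagram returns to even Conway normal form with two fewer entries.

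The hard part will be the upper-bound geometry. The $a_2$-bookkeeping dictates the move count, but one must actually produce legitimate $\Delta$-moves realizing each unit reduction and verify that, after a whole block of them, the diagram is again in the canonical form $C(\alpha_1,\dots,\alpha_{n-2})$ rather than some stray tangle. The intermediate diagrams are generally \emph{not} two-bridge, so it is the invariant $a_2$, and not the continued fraction, that should be tracked step by step; rematching the terminal diagram to Conway's normal form, and confirming that each move is a genuine $\Delta$-move on three arcs of a single component with the correct orientations and signs, is where the real care is required.
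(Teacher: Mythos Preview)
Your overall strategy is exactly the paper's: Okada's inequality $u^{\Delta}(K)\ge|a_2(K)|$ for the lower bound, a skein/crossing-change recursion for $a_2$, and an explicit $\Delta$-move sequence of length $|a_2(K)|$ for the upper bound. The difference is in how the reduction is organised. You induct on $n$ by stripping off the \emph{last} two entries, passing from $C(\alpha_1,\dots,\alpha_n)$ to $C(\alpha_1,\dots,\alpha_{n-2})$ at a cost of $\beta_n(\beta_1+\beta_3+\cdots+\beta_{n-1})$ moves. The paper instead attacks the \emph{second} entry: the clasp-over-hurdle move (Claim~\ref{claim:technique}) changes one crossing in the $\alpha_2$-region at a cost of exactly $\tfrac12\alpha_1$ $\Delta$-moves, so $\tfrac12\alpha_2$ iterations reduce $\alpha_2$ to $0$, and the identity $C(\alpha_1,0,\alpha_3,\dots,\alpha_n)\cong C(\alpha_1+\alpha_3,\alpha_4,\dots,\alpha_n)$ puts the result back in Conway normal form with two fewer entries. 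The same crossing-change computes the jump in $a_2$ via Proposition~\ref{prop:lk}, so the $a_2$-bookkeeping and the move count telescope in lockstep.

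What the paper's choice buys is that the geometric step is completely pinned down by one local picture: the clasp always sits adjacent to the first twist region and has only $\tfrac12\alpha_1$ hurdles to leap, so Claim~\ref{claim:technique} applies verbatim at every stage and the intermediate diagram is manifestly again in normal form. Your scheme is arithmetically equivalent and the totals agree, but realising your inductive step geometrically requires a clasp at the $\alpha_n$-end to cross $\tfrac12(\alpha_1+\alpha_3+\cdots+\alpha_{n-1})$ hurdles spread across the entire diagram; making that precise---and confirming the terminal diagram really is $C(\alpha_1,\dots,\alpha_{n-2})$---is exactly the ``hard part'' you flag, and the paper's front-end reduction sidesteps it entirely.
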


\begin{restatable}{theorem}{odd} \label{thm:odd}
Let $K=C(\alpha_1, \alpha_2, ... , \alpha_n)$ be a two-bridge knot, where $\alpha_i$ is a positive even integer for $1 \leq i \leq n-1$, and $\alpha_n$ is a positive odd integer. In particular, if $n$ = 1 ($K=C(\alpha_1)$), then $\alpha_1$ is a positive odd integer.
Then, we have $u^{\Delta}(K)=|a_2(K)|$.
\begin{enumerate}
    \item
    If $n$ is even, then
    \[
    u^{\Delta}(K)=
\frac{1}{4} \sum_{j=1}^{n/2} \sum_{i=1}^{j} \alpha_{2i-1}\alpha_{2j} + \frac{1}{8} ( \sum_{i=1}^{n/2} \alpha_{2i-1} )^{2} \quad \big(= a_2(K)\big).
\]
    \item
    If $n$ is odd, then
    \[
    u^{\Delta}(K)=
\frac{1}{4} \sum_{j=1}^{(n-1)/2} \sum_{i=1}^{j} \alpha_{2i-1}\alpha_{2j} + \frac{1}{8} \{ ( \sum_{i=1}^{(n+1)/2} \alpha_{2i-1} )^{2} -1 \} \quad \big(= a_2(K)\big).
\]
\end{enumerate}
\end{restatable}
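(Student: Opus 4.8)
The plan is to establish $u^{\Delta}(K)=|a_2(K)|$ by proving two matching inequalities. The lower bound $u^{\Delta}(K)\ge |a_2(K)|$ is handed to us by Okada's inequality from \cite{Oka1}, and it is sharp in principle because a single $\Delta$-move changes $a_2$ by exactly $\pm1$; hence any sequence of $|a_2(K)|$ $\Delta$-moves taking $K$ to the unknot is automatically optimal. So the theorem reduces to two essentially independent tasks: (i) checking that $|a_2(K)|$ equals the displayed closed forms in cases (1) and (2), and (ii) producing, for each such $K$, an explicit unknotting sequence of exactly $|a_2(K)|$ $\Delta$-moves.

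For task (i) I would compute $\nabla_K(z)$ by the Conway skein relation applied at a single crossing of the terminal odd twist region $\alpha_n=2\beta_n-1$. The $L_{+}/L_{-}$ pair relates $\nabla_K$ to the Conway polynomial of a two-bridge knot with even terminal entry, which is governed by Theorem \ref{thm:even}, while the smoothing $L_{0}$ produces a two-bridge link whose $\nabla$ is controlled recursively. This sets up an induction on $\beta_n$ (and then on $n$), with base cases $\beta_n=1$, where the trailing entry $1$ collapses the continued fraction, and $n=1$, where $K=T(2,\alpha_1)$ has $a_2=(\alpha_1^2-1)/8$. Reading off the $z^2$-coefficient, the quadratic body $\tfrac14\sum_{j}\sum_{i\le j}\alpha_{2i-1}\alpha_{2j}$ is inherited from the even case, and the extra term $\tfrac18(\sum_i\alpha_{2i-1})^2$ (resp. $\tfrac18\{(\sum_i\alpha_{2i-1})^2-1\}$) is precisely the contribution of the odd tail.

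For task (ii) I would build the $\Delta$-unknotting sequence on the standard alternating diagram of $C(\alpha_1,\dots,\alpha_n)$ by induction on $n$, extending the construction that proves Theorem \ref{thm:even}. The geometric engine is that a $\Delta$-move placed across three strands of a twist region trades away part of a twist while changing $a_2$ by $1$; by applying such moves to peel off the twist regions one at a time I would reduce $K$ to a knot handled either by Theorem \ref{thm:even} or by the optimal torus-knot sequences of \cite{NaNaU}, spending on each region exactly the number of moves equal to its contribution to $|a_2(K)|$. Maintaining a running count and verifying that every stage decreases $|a_2|$ by precisely its allotment then yields $u^{\Delta}(K)\le|a_2(K)|$, matching the lower bound.

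The step I expect to be the main obstacle is the tightness of the construction in task (ii): each $\Delta$-move must be positioned so that it lowers $|a_2|$ by exactly $1$ with no wasted move, and every intermediate diagram must remain inside a family (two-bridge, or a two-bridge knot interacting with a $T(2,\cdot)$ summand) to which the inductive hypothesis and the torus-knot result apply. The delicate bookkeeping is concentrated in the coupling between the odd tail and the even body, since the vertical-twist sum $\sum_i\alpha_{2i-1}$ feeds both the cross term $\tfrac14\sum_{j}\sum_{i\le j}\alpha_{2i-1}\alpha_{2j}$ and the squared term $\tfrac18(\sum_i\alpha_{2i-1})^2$; isolating a clean normal form after each partial reduction, so that these contributions are removed without interference, is likely the crux of the argument.
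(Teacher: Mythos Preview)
Your overall architecture matches the paper's: Okada's inequality for the lower bound, an explicit $\Delta$-move sequence for the upper bound, and a separate computation of $a_2(K)$ to see that the two coincide. However, your concrete plan for tasks (i) and (ii) diverges from the paper and contains a slip. You propose to apply the skein relation at a crossing of the \emph{terminal} twist region $\alpha_n$, claiming that the $L_+/L_-$ pair relates $K$ to a two-bridge knot with \emph{even} terminal entry handled by Theorem~\ref{thm:even}. But a crossing change inside a twist region shifts that entry by $\pm 2$, so an odd $\alpha_n$ stays odd; you never land in the even case this way. Your induction on $\beta_n$ down to $\alpha_n=1$ (then collapsing) is salvageable, but it does not feed into Theorem~\ref{thm:even} as you describe, and the bookkeeping for the $\Delta$-move side becomes awkward because the ``cost'' of a crossing change at the far end depends on a linking number that is not simply $\tfrac12\alpha_1$.

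The paper instead works from the \emph{second} entry: it uses Proposition~\ref{prop:lk} to get $a_2(C(\alpha_1,\alpha_2,\dots)) - a_2(C(\alpha_1,\alpha_2-2,\dots)) = \pm\tfrac12\alpha_1$, and in exact parallel uses Claim~\ref{claim:technique} to get $d_G^{\Delta}(C(\alpha_1,\alpha_2,\dots), C(\alpha_1,\alpha_2-2,\dots)) \le \tfrac12\alpha_1$. Iterating drives $\alpha_2$ to $0$ (collapsing $C(\alpha_1,0,\alpha_3,\dots)$ to $C(\alpha_1+\alpha_3,\alpha_4,\dots)$), then $\alpha_4$ to $0$, and so on, until one reaches either $C(\sum\alpha_{2i-1},1)\cong T(2,\sum\alpha_{2i-1}+1)$ when $n$ is even, or $C(\sum\alpha_{2i-1})\cong T(2,\sum\alpha_{2i-1})$ when $n$ is odd; the torus-knot result from \cite{NaNaU} finishes both the $a_2$ and the $u^{\Delta}$ computations simultaneously. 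The virtue of this route is that each reduction step contributes the \emph{same} quantity $\tfrac12(\alpha_1+\alpha_3+\cdots)$ to both the $a_2$-difference and the $\Delta$-move count, so no separate ``tightness'' verification is needed: the upper bound equals $|a_2(K)|$ by construction, step by step. Your worry about coupling between the odd tail and the even body dissolves entirely in this scheme, since the odd $\alpha_n$ is never touched until it sits alone as (part of) a torus knot.
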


In this way, we can present examples of two-bridge knots whose $\Delta$-unknotting numbers are equal to the second coefficients of their Conway polynomials. In addition, we introduce an unknotting technique for two-bridge knots (Claim \ref{claim:technique}).

Furthermore, if we discover a special method to unknot a two-bridge knot, it may allow us to determine its $\Delta$-unknotting number.
Based on this idea, we propose the following conjecture concerning two-bridge knots with $u^{\Delta}(K) = 1$ (Conjecture \ref{thm:conj}). We also provide a partial proof of this conjecture (Theorems \ref{thm:deltaone} and \ref{thm:deltaten}).

\begin{restatable}{conjecture}{conj} \label{thm:conj}
Let $K$ be a nontrivial two-bridge knot. Then, the following two conditions are equivalent. 
\begin{enumerate}
    \item       
    $u^{\Delta}(K) = 1$.
    \item       
    $K$ or $K^*$(the mirror image of $K$) can be expressed as\\ $C(\beta, \beta_n, ... , \beta_2, \beta_1, \overbrace{1, 1, 1, 1, 1}^{\text{5}}, 1-\beta_1, -\beta_2, ... , -\beta_n)$.
\end{enumerate}
\end{restatable}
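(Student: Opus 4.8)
The plan is to prove the two implications $(2)\Rightarrow(1)$ and $(1)\Rightarrow(2)$ separately, expecting them to differ sharply in difficulty. The forward implication is constructive and should be provable in full generality, whereas the converse is the reason the statement is posed as a conjecture rather than a theorem.

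For $(2)\Rightarrow(1)$ I would argue diagrammatically, directly on the normal form
\[
C(\beta, \beta_n, \ldots, \beta_2, \beta_1, 1,1,1,1,1, 1-\beta_1, -\beta_2, \ldots, -\beta_n).
\]
This form is deliberately symmetric: the left wing $\beta_n,\ldots,\beta_1$ is matched against the right wing $1-\beta_1,-\beta_2,\ldots,-\beta_n$ (its sign reversal, up to the single $+1$ correction in the first slot), with the five consecutive $1$'s serving as a core through which three strands pass. The key observation is that the wings are arranged so that one $\Delta$-move supported on the core separates the three strands; after the move the two wings cancel as continued-fraction (rational-tangle) operations, and the resulting diagram reduces to the trivial knot by Reidemeister moves. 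This yields $u^{\Delta}(K)\le 1$, and since $K$ is nontrivial, $u^{\Delta}(K)=1$. I would package this single move and the ensuing cancellation as the unknotting technique of Claim \ref{claim:technique}; the whole argument should reduce to a finite, mechanical verification on the continued fraction, essentially the content of Theorem \ref{thm:deltaone}.

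For $(1)\Rightarrow(2)$ the first step is to extract necessary conditions from the Conway polynomial. By Okada's bound \cite{Oka1}, $u^{\Delta}(K)=1$ forces $|a_2(K)|\le 1$, and in fact $|a_2(K)|=1$ since a single $\Delta$-move changes $a_2$ by $\pm1$ \cite{MaN}; combined with the explicit formula for $a_2$ of a two-bridge knot in terms of its continued fraction (as used for Theorems \ref{thm:even} and \ref{thm:odd}), this already restricts the admissible $C(\alpha_1,\ldots,\alpha_n)$ considerably. The substantive task, however, is to show that every two-bridge knot at $\Delta$-Gordian distance one from $O$ actually has the stated form, i.e.\ to classify the two-bridge knots reachable from the trivial knot by a single $\Delta$-move. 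I would attempt this by fixing a diagram, isolating the three-strand region on which the move acts, and tracking the induced change of the continued fraction.

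The main obstacle is precisely this reverse direction in full generality: a $\Delta$-move need not be supported on a standard rational tangle and need not preserve the two-bridge structure, so one cannot assume the move occurs at a prescribed location, and controlling all placements on an arbitrary two-bridge diagram is exactly where a general argument stalls. The realistic target is therefore a partial result, namely verifying the equivalence for two-bridge knots of bounded crossing number (up to ten crossings) by enumerating the candidates with $|a_2|=1$ and matching each against the normal form; this is what Theorem \ref{thm:deltaten} is designed to accomplish, and together with the clean forward implication it provides the evidence supporting the general conjecture.
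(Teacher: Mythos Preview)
Your plan matches the paper's treatment: the statement is a conjecture, and the paper proves only $(2)\Rightarrow(1)$ in full (Theorem~\ref{thm:deltaone}, via a single $\Delta$-move on the five-$1$ core exactly as you outline) together with a partial verification of $(1)\Rightarrow(2)$ by enumeration up to ten crossings (Theorem~\ref{thm:deltaten}, Table~1). One small correction: Claim~\ref{claim:technique} is not the mechanism for Theorem~\ref{thm:deltaone}; that claim reduces $\alpha_2$ by $2$ using $\tfrac{1}{2}\alpha_1$ $\Delta$-moves and feeds into Theorems~\ref{thm:even} and~\ref{thm:odd}, whereas the proof of Theorem~\ref{thm:deltaone} is just the single $\Delta$-move of Figure~\ref{fig:deltaone} with no continued-fraction bookkeeping.
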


Theorem \ref{thm:deltaone} establishes the implication (2) $\Rightarrow$ (1) in Conjecture \ref{thm:conj}.

\begin{restatable}{theorem}{deltaone}
\label{thm:deltaone}
Let $K=C(\beta, \beta_n, ... , \beta_2, \beta_1, 1, 1, 1, 1, 1, 1-\beta_1, -\beta_2, ... , -\beta_n)$ be a nontrivial two-bridge knot.
Then, we have $u^{\Delta}(K)= 1$.
\end{restatable}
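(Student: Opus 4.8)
The plan is to establish the two inequalities $u^{\Delta}(K)\ge 1$ and $u^{\Delta}(K)\le 1$ separately. The lower bound is immediate: $K$ is a nontrivial knot, and the trivial knot is the only knot of $\Delta$-unknotting number $0$, so $u^{\Delta}(K)\ge 1$. (One may instead compute that $a_2(K)=\pm1$ for this family from the Conway polynomial of a two-bridge knot and invoke Okada's bound $u^{\Delta}(K)\ge|a_2(K)|$ from \cite{Oka1}; this is consistent with, and anticipates, the final value.) All the real content is therefore in exhibiting a single $\Delta$-move that deforms $K$ into the trivial knot $O$, i.e.\ in the upper bound $u^{\Delta}(K)\le 1$.

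For the upper bound I would work throughout in Conway's normal form and exploit the near-antisymmetry of the given word. Write the form as $C(\beta, A, 1,1,1,1,1, B)$ with $A=(\beta_n,\dots,\beta_2,\beta_1)$ and $B=(1-\beta_1,-\beta_2,\dots,-\beta_n)$, and note that $B$ is exactly the negated reverse of $A$ apart from the extra $+1$ in its leading entry. The central block of five $1$'s, together with this $+1$ offset, is precisely the ``footprint'' of one $\Delta$-move. The strategy is to apply the unknotting technique of Claim \ref{claim:technique} at this central block: a single $\Delta$-move performed there, followed by the standard Conway-form reductions (the continued-fraction identities such as $C(\dots,a,0,b,\dots)=C(\dots,a+b,\dots)$), should turn the sub-word $(\beta_1,1,1,1,1,1,1-\beta_1)$ into $(\beta_1,0,-\beta_1)$, thereby bringing $K$ to the antisymmetric form $C(\beta,\beta_n,\dots,\beta_1,0,-\beta_1,\dots,-\beta_n)$. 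Here the leading $\beta$ is retained rather than absorbed.

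It then remains only to check that this antisymmetric form is the trivial knot, which is a short continued-fraction computation: for any $\beta$ and any sequence $c_1,\dots,c_k$, the form $C(\beta,c_1,\dots,c_k,0,-c_k,\dots,-c_1)$ has fraction $1/\beta$ --- the inner $0$ forces successive cancellations of $c_i$ against $-c_i$ working outward --- and a two-bridge knot with unit numerator is trivial. The main obstacle is the middle step: rigorously tracking the single three-strand $\Delta$-move through the rational-tangle decomposition and verifying that it has exactly the claimed effect on the continued fraction, namely that the five $1$'s collapse to a single $0$ while the $+1$ offset shifts $1-\beta_1$ to $-\beta_1$. This is precisely the diagrammatic bookkeeping encapsulated in Claim \ref{claim:technique}, and it is where the specific count of \emph{five} ones (rather than any other number) must enter. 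Once this local identity is established, combining it with the lower bound yields $u^{\Delta}(K)=1$.
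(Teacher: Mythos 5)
Your overall architecture is sound: the lower bound from nontriviality is correct (and the parenthetical observation that $|a_2(K)|$ must then equal $1$ is forced by Proposition \ref{prop:a2} anyway), and your continued-fraction cancellation showing that $C(\beta,\beta_n,\dots,\beta_1,0,-\beta_1,\dots,-\beta_n)$ is trivial is a correct and clean way to finish \emph{once} the intermediate form is reached. This matches the paper's structure, whose entire proof is: perform one $\Delta$-move at the location indicated in Figure \ref{fig:deltaone}, and observe that the result is the trivial knot.

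However, there is a genuine gap at exactly the point you flag as ``the main obstacle'': you never establish that a single $\Delta$-move performed at the central block turns the sub-word $(\beta_1,1,1,1,1,1,1-\beta_1)$ into $(\beta_1,0,-\beta_1)$, and your attempt to outsource this to Claim \ref{claim:technique} fails because that claim asserts something different. Claim \ref{claim:technique} is a \emph{multi}-move crossing-change technique: for $\alpha_1$ a positive even integer, it uses $\frac{1}{2}\alpha_1$ $\Delta$-moves to replace $\alpha_2$ by $\alpha_2-2$, i.e.\ it bounds $d_G^{\Delta}\bigl(C(\alpha_1,\alpha_2,\dots),C(\alpha_1,\alpha_2-2,\dots)\bigr)$ by $\frac{1}{2}\alpha_1$. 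It says nothing about a one-move collapse of five $1$'s, and indeed applying it here would generally cost more than one move. The content of Theorem \ref{thm:deltaone} is precisely the diagrammatic fact that this particular Conway form admits a single unknotting $\Delta$-move, which the paper supplies by an explicit diagram; your proposal restates that fact in continued-fraction language (plausibly, and consistently with the examples in Table 1) but does not prove it. As written, the argument is a plan with its central step assumed, not a proof.
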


Theorem \ref{thm:deltaten} is a partial case of Conjecture \ref{thm:conj}.

\begin{restatable}{theorem}{deltaten}
\label{thm:deltaten}
Let $K$ be a two-bridge knot with $u^{\Delta}(K)=1$ and at most 10 crossings, except for $10_{30}$, $10_{36}$, and $10_{38}$. 
Then, $K$ or $K^*$ can be expressed as $C(\beta, \beta_n, ... , \beta_2, \beta_1, 1, 1, 1, 1, 1, 1-\beta_1, -\beta_2, ... , -\beta_n)$.
\end{restatable}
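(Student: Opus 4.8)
The plan is to prove Theorem \ref{thm:deltaten} by a finite verification, since it concerns only the finitely many two-bridge knots with at most ten crossings. First I would narrow the candidate list. Because a single $\Delta$-move changes the value of $a_2$ by exactly $\pm 1$ (the fact underlying Okada's bound $u^{\Delta}(K)\ge |a_2(K)|$ in \cite{Oka1}) and the trivial knot has $a_2 = 0$, the hypothesis $u^{\Delta}(K)=1$ forces $a_2(K)=\pm 1$, i.e. $|a_2(K)|=1$. The exact $\Delta$-unknotting numbers of all two-bridge knots in this crossing range are recorded in the tables of \cite{Oka1, Oka2} (up to nine crossings) and \cite{Naka1, Naka2} (ten crossings); the only two-bridge knots in range whose value is not yet pinned down are $10_{30}$, $10_{36}$, $10_{38}$ (each with $u^{\Delta}=1$ or $3$), which the statement explicitly excludes. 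Hence one obtains an explicit finite list $\mathcal{L}$ of the two-bridge knots with $u^{\Delta}=1$ that must be checked.

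Next I would exhibit, for each $K \in \mathcal{L}$, a presentation of $K$ or of its mirror image $K^{*}$ in the target shape $C(\beta, \beta_n, \ldots, \beta_2, \beta_1, 1,1,1,1,1, 1-\beta_1, -\beta_2, \ldots, -\beta_n)$. To do this I would record the standard Conway normal form of each $K$ from the knot tables and then invoke Schubert's classification of two-bridge knots: $\mathfrak{b}(p,q)$ and $\mathfrak{b}(p,q')$ are equivalent up to mirror image precisely when $q' \equiv q^{\pm 1} \pmod{p}$. For each $K$ I would compute, as a function of the parameters $\beta, \beta_i$, the fraction $p/q$ associated to the template by running the continued-fraction algorithm on the sequence $(\beta, \beta_n, \ldots, \beta_1, 1,1,1,1,1, 1-\beta_1, \ldots, -\beta_n)$, and then solve for parameters making this fraction agree with that of $K$ modulo the $q \mapsto q^{\pm 1}$ ambiguity and the mirroring $q \mapsto -q$. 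Theorem \ref{thm:deltaone} guarantees that any presentation of this shape is indeed consistent with $u^{\Delta}=1$, so the matching is the entire content to be supplied.

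The main obstacle is the bookkeeping in this second step. The template is rigid—a central block of five $1$'s flanked by the antisymmetric wings $\beta_1,\ldots,\beta_n$ and $1-\beta_1,-\beta_2,\ldots,-\beta_n$—whereas each tabulated knot is handed to us as an arbitrary continued fraction, so matching requires systematically rewriting continued fractions by the standard identities (inserting and deleting unit entries, absorbing signs, and reversing the sequence) and keeping careful track of when passage to the mirror $K^{*}$ is required. A subtler point is completeness: I must confirm not only that each listed $K$ fits the template but also that no $u^{\Delta}=1$ knot in range has been overlooked, and this is exactly where the correctness of the underlying tables and the exclusion of the three ambiguous knots $10_{30},10_{36},10_{38}$ are essential. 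Since $\mathcal{L}$ is small, each case reduces to a short continued-fraction computation, and the theorem follows by exhausting the list.
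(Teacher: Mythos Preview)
Your proposal is correct and follows essentially the same approach as the paper: a finite case-by-case verification over the explicit list of two-bridge knots with $u^{\Delta}=1$ and at most ten crossings, drawn from the existing tables. The paper's proof is even terser than your plan---it simply records the list of fourteen knots together with explicit representations of each in the required form (Table~1), whereas you propose to locate those representations via Schubert's classification and continued-fraction identities; either way the content is the same exhibition of witnesses for a finite list.
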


Concerning the ordinary unknotting number for nontrivial two-bridge knots, the following three conditions are equivalent \cite{KaM}.
\begin{enumerate}
    \item       
    $u(K) = 1$.
    \item       
    There exist an odd integer $p(>1)$ and coprime, positive integers $m$ and $n$ with $2mn = p \pm1$ and $K$ is equivalent to $S(p, 2n^2)$.
    \item 
    $K$ can be expressed as $C(\beta, \beta_1, \beta_2, ..., \beta_k, \pm2, -\beta_k, ..., -\beta_2, -\beta_1)$.
\end{enumerate}

As a byproduct, we consider the $\Delta$-Gordian distances of two-bridge knots of type $C(2\beta_1, 2\beta_2)$ (Theorem \ref{thm:deltadelta} and Table 2).
\begin{restatable}{theorem}{deltadelta} \label{thm:deltadelta}
Let $K=C(\alpha_1,\alpha_2)$ and $K^{'}=C(\alpha_1',\alpha_2')$ be two-bridge knots, where $\alpha_1, \alpha_2, \alpha_1', \alpha_2'$ are positive even integers, and $\alpha_1 \geq \alpha_1'$, $\alpha_2 \geq \alpha_2'$.
Then, we have
\begin{equation*}
d_G^{\Delta}(K,K^{'}) = \frac{1}{4} |\alpha_1 \alpha_2 - \alpha_1' \alpha_2'| 
\quad \big(= |a_2(K) - a_2(K^{'})| 
= |u^{\Delta}(K) - u^{\Delta}(K^{'})| \big).
\end{equation*}
\end{restatable}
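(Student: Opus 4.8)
The plan is to establish the two matching inequalities $d_G^{\Delta}(K,K') \geq \frac{1}{4}|\alpha_1\alpha_2 - \alpha_1'\alpha_2'|$ and $d_G^{\Delta}(K,K') \leq \frac{1}{4}|\alpha_1\alpha_2 - \alpha_1'\alpha_2'|$ separately, after first recording the relevant invariant values. Applying Theorem \ref{thm:even} in the case $n=2$ gives $u^{\Delta}(K) = \frac{1}{4}\alpha_1\alpha_2 = -a_2(K)$ and likewise $u^{\Delta}(K') = \frac{1}{4}\alpha_1'\alpha_2' = -a_2(K')$. Since the hypotheses $\alpha_1 \geq \alpha_1'$ and $\alpha_2 \geq \alpha_2'$ force $\alpha_1\alpha_2 \geq \alpha_1'\alpha_2'$, the three quantities displayed in the statement are all equal and nonnegative, so it suffices to prove $d_G^{\Delta}(K,K') = |a_2(K) - a_2(K')|$ and read off the remaining equalities from the $n=2$ case of Theorem \ref{thm:even}.

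For the lower bound I would use the fact that a single $\Delta$-move changes $a_2$ by exactly $\pm 1$ (the knot-theoretic mechanism underlying Okada's inequality $u^{\Delta}(K)\geq|a_2(K)|$ of \cite{Oka1}). Consequently, along any sequence of $\Delta$-moves deforming a diagram of $K$ into one of $K'$, the value of $a_2$ moves by at most $1$ per step, which yields $d_G^{\Delta}(K,K') \geq |a_2(K) - a_2(K')| = \frac{1}{4}(\alpha_1\alpha_2 - \alpha_1'\alpha_2')$.

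For the upper bound I would construct an explicit sequence realizing this count. Exploiting the monotonicity hypotheses, I would factor the deformation through the intermediate knot $C(\alpha_1,\alpha_2')$, so that by the triangle inequality $d_G^{\Delta}(K,K') \leq d_G^{\Delta}\bigl(C(\alpha_1,\alpha_2),\, C(\alpha_1,\alpha_2')\bigr) + d_G^{\Delta}\bigl(C(\alpha_1,\alpha_2'),\, C(\alpha_1',\alpha_2')\bigr)$. Each leg reduces a single even parameter in steps of $2$: reducing $\alpha_2$ by $2$ should cost $\frac{1}{2}\alpha_1$ moves and reducing $\alpha_1$ by $2$ should cost $\frac{1}{2}\alpha_2'$ moves, which telescopes to $\frac{1}{4}\bigl[\alpha_1(\alpha_2-\alpha_2') + \alpha_2'(\alpha_1-\alpha_1')\bigr] = \frac{1}{4}(\alpha_1\alpha_2 - \alpha_1'\alpha_2')$. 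The individual reduction steps are exactly the local unknotting technique of Claim \ref{claim:technique}, which I would invoke to pass between successive Conway forms; this is essentially the same machinery that drives the upper-bound half of Theorem \ref{thm:even}.

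The main obstacle I anticipate is this upper bound, and specifically verifying that each elementary reduction $C(\alpha_1,\alpha_2) \to C(\alpha_1,\alpha_2-2)$ can genuinely be realized in $\frac{1}{2}\alpha_1$ $\Delta$-moves via Claim \ref{claim:technique}, together with confirming that every intermediate Conway form $C(\mathrm{even},\mathrm{even})$ remains an actual knot rather than a two-component link, so that the Gordian distance is well defined at each stage. Matching the move-count to the $a_2$ drop at each step is precisely what pins the equality down; once this is in place, the lower and upper bounds coincide and the theorem follows.
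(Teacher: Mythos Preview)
Your proposal is correct and follows essentially the same route as the paper: the paper computes $a_2$ via the $n=2$ case of Theorem~\ref{thm:even} (stated there as Corollary~\ref{cor:a1a2}), obtains the upper bound by factoring through $C(\alpha_1,\alpha_2')$ and iterating Claim~\ref{claim:technique} on each leg, and invokes Proposition~\ref{prop:a2} (Okada's inequality) for the lower bound. Your concern about intermediate diagrams is unfounded here, since each step keeps both entries positive and even, so every $C(\mathrm{even},\mathrm{even})$ encountered is a knot by the same hypothesis as in Theorem~\ref{thm:even}.
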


\vspace{1em}
\section{Preliminaries}\label{sec2}

A knot $K$ is said to be a two-bridge knot if $K$ has a diagram as in Figure \ref{fig:twobridge}, called Conway's normal form. For a knot diagram as in Figure \ref{fig:twobridge}, each $|\alpha_i|$ presents the number of half-twists for integers $\alpha_1, \alpha_2, ... , \alpha_n$. In this paper, for the sign of $\alpha_i$, we assume that a right-handed half-twist is positive if $i$ is odd, and a left-handed half-twist is positive if $i$ is even. We denote this knot diagram by $C(\alpha_1, \alpha_2, ... , \alpha_n)$.
\begin{figure}[htbp]
 \centering
 \includegraphics[width=1\linewidth]{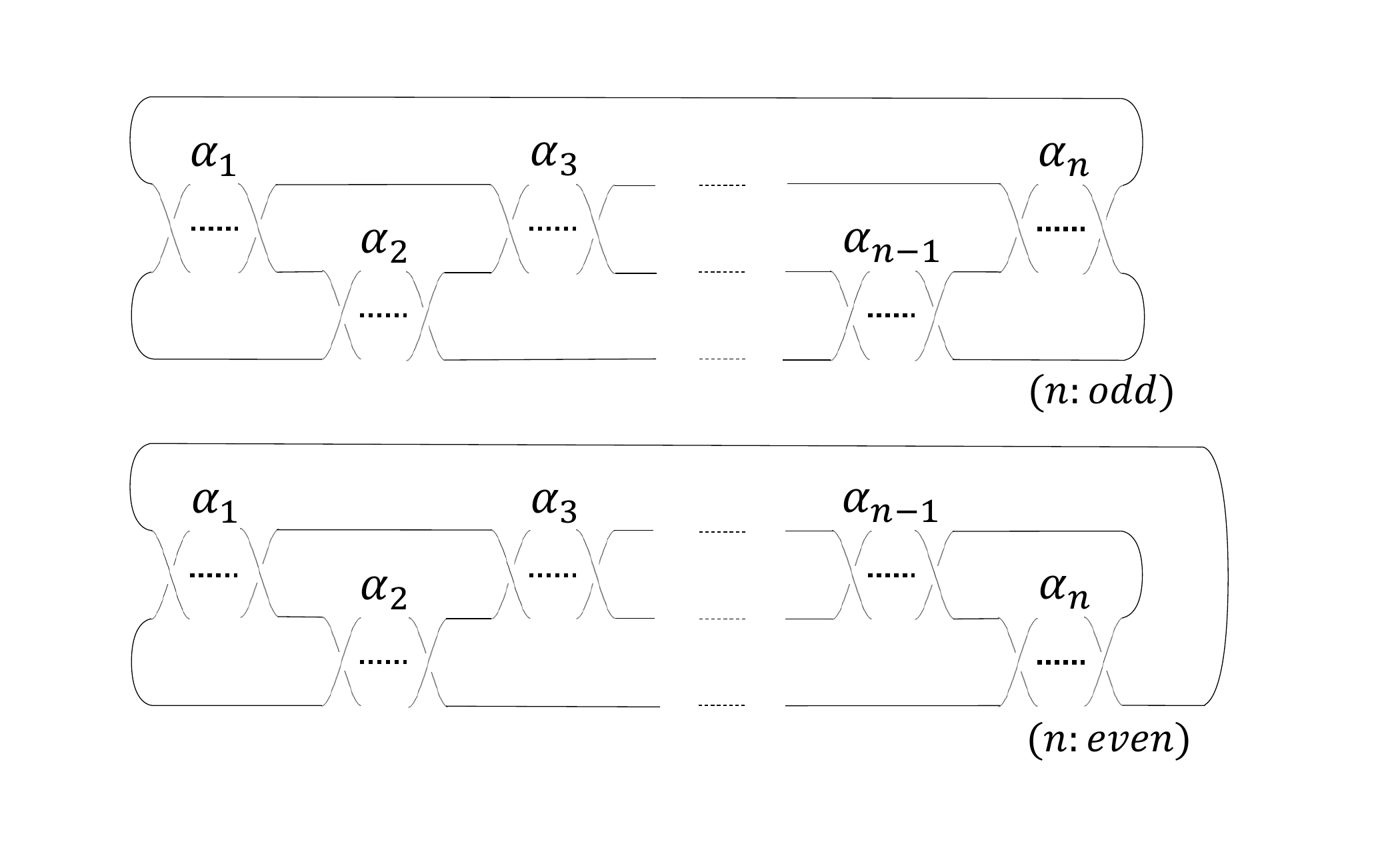}
 \caption{}
 \label{fig:twobridge}
\end{figure}

We use the technique in \cite{MaN, NaNaU}.

\begin{claim}[\cite{MaN}]\label{claim:clasp}
A clasp can leap over a hurdle by a single $\Delta$-move.
\end{claim}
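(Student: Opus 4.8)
The plan is to realize the ``clasp leaps over a hurdle'' operation as a single instance of the $\Delta$-move of Figure \ref{fig:delta}, up to planar isotopy and Reidemeister moves. First I would fix the local picture explicitly: a clasp consists of two arcs hooked together by a pair of crossings, and I would position the hurdle (a third arc) so that it runs across the clasp, passing between its two crossings. This produces a local diagram in which three arcs bound a small triangular disk.

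Next I would observe that the three arcs surrounding this triangle sit in exactly the configuration to which a $\Delta$-move applies. Applying a single $\Delta$-move simultaneously alters the three crossings on the boundary of the triangle; geometrically, this drags the hooked pair of arcs across the hurdle to its far side.

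I would then finish by reducing the resulting diagram back to a standard clasp. After the $\Delta$-move, a pair of the newly created crossings cancel by a type II Reidemeister move, and a type III move slides the remaining strand clear of the clasp, leaving precisely a clasp of the same type located on the opposite side of the hurdle. Since the cited figure from \cite{MaN} makes all of these steps explicit, the verification reduces to drawing the before-and-after diagrams and checking their equivalence.

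The step I expect to be the main obstacle is this final verification: one must track the over/under information and, for oriented diagrams, the orientations through the $\Delta$-move carefully, to confirm that the output is the \emph{same} clasp transported across the hurdle, rather than a clasp whose crossings have been switched or whose hook has been undone. Matching the triangle's three crossings with the action of the $\Delta$-move is where the bookkeeping is delicate, but once the local diagram is set up as above, it becomes a finite and routine check.
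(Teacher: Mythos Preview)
Your plan is correct and coincides with the paper's treatment: the paper gives no written argument for Claim~\ref{claim:clasp} but simply displays Figure~\ref{fig:clasp} and attributes the fact to \cite{MaN}, so the entire content is the diagrammatic check you describe. Your outline---position the three arcs so they bound a small triangle, apply one $\Delta$-move, then simplify by Reidemeister moves---is exactly what that figure encodes.
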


The meaning is given in Figure \ref{fig:clasp}. Here, the clasp is vertically standing on the 2-sphere on which the other part of the knot diagram lies (except for crossings). We use Claim \ref{claim:clasp} to show Claim \ref{claim:change}.
\begin{figure}[htbp]
 \centering
 \includegraphics[width=0.6\linewidth]{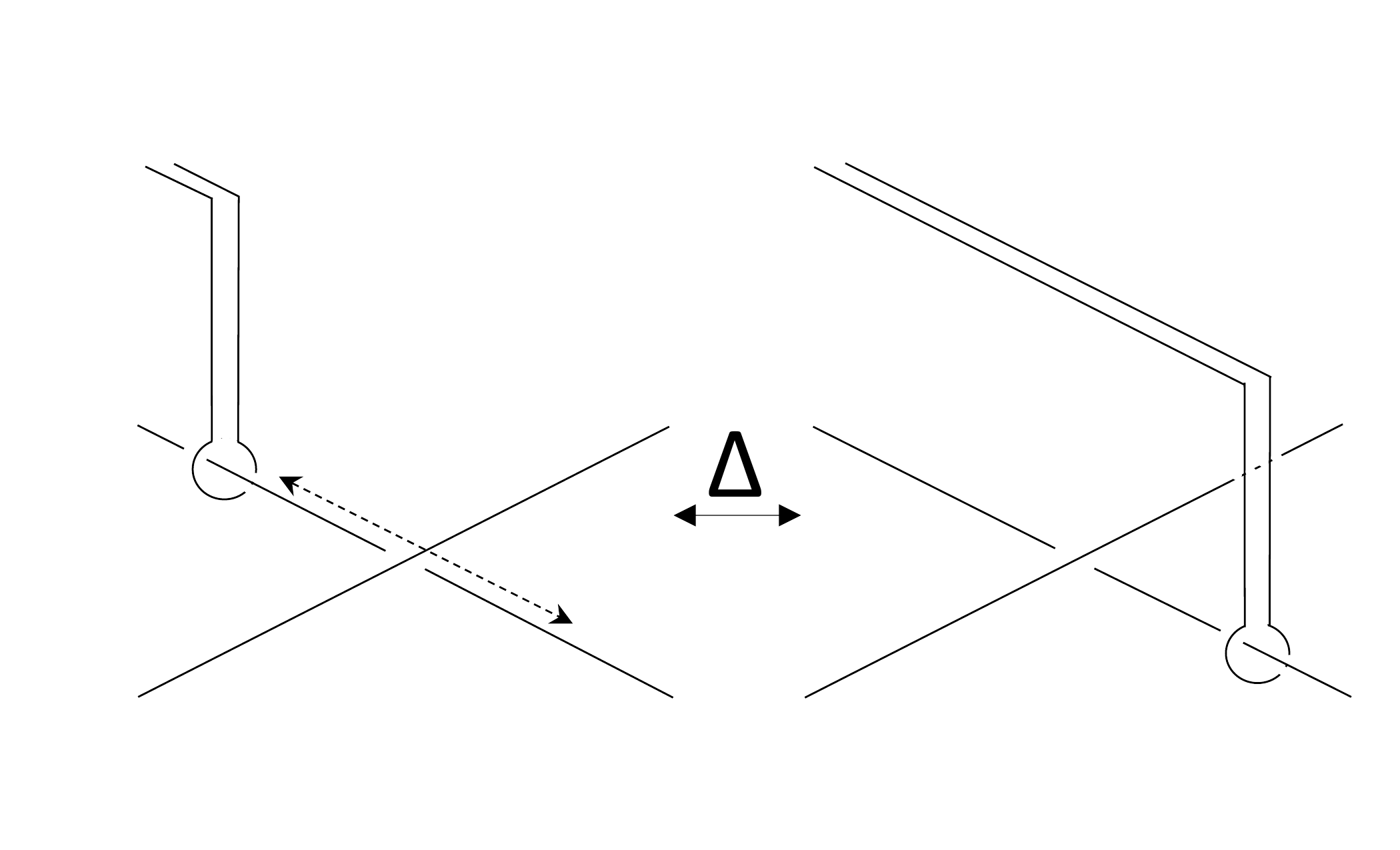}
 \caption{}
 \label{fig:clasp}
\end{figure}

\begin{claim}[\cite{NaNaU}]\label{claim:change}
We can exchange a crossing of a knot by a finite number of $\Delta$-moves.
\end{claim}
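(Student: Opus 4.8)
The plan is to realize a single crossing exchange as a clasp-over-hurdle move, which by Claim \ref{claim:clasp} costs exactly one $\Delta$-move, supplemented only by ambient isotopy and Reidemeister moves, none of which consume a $\Delta$-move. Fix the crossing $c$ of the diagram that we wish to exchange, and let $P$ and $Q$ be the two arcs meeting there, with $P$ passing over $Q$. The goal is to produce a diagram agreeing with the original everywhere outside a small disk around $c$, but in which $Q$ now passes over $P$, using a bounded number of $\Delta$-moves.

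First I would work inside a disk neighborhood of $c$, where the local picture is the standard two-strand tangle, and use a local isotopy (a finger/Reidemeister move) to lift a short arc of $P$ off the projection sphere so that it doubles back and hooks $Q$, forming a clasp that stands vertically over the sphere in the sense of Claim \ref{claim:clasp}. The arc $Q$, extended slightly past $c$, then serves as the hurdle standing on the sphere. The room to introduce and route this clasp always exists precisely because the local configuration near any crossing is a trivial two-strand tangle in a disk.

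Next I would invoke Claim \ref{claim:clasp} to let the clasp leap over the hurdle $Q$ in a single $\Delta$-move, and then undo the finger by the reverse isotopy. After this cleanup the diagram should coincide with the original outside the disk around $c$, while inside the disk the over/under datum at $c$ has been reversed; the net effect of the one $\Delta$-move together with the isotopies is exactly the desired exchange. Iterating over the crossings one wishes to change then gives the claim with finitely many $\Delta$-moves.

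The main obstacle is the bookkeeping in the final verification: one must track all three over/under assignments of the clasp and hurdle strands through the leap-over move and through the auxiliary Reidemeister moves, and confirm that the outcome is a genuinely switched crossing rather than an isotopic copy of the original. This check must be carried out for each choice of orientation and of which strand is initially on top (recalling the sign convention for half-twists fixed in Section \ref{sec2}), so the argument should either treat the representative case and reduce the others to it by symmetry, or exhibit the local tangle pictures before and after explicitly. Once these local checks are in place, the reduction to Claim \ref{claim:clasp} completes the proof.
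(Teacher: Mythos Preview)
Your proposal asserts that a single crossing change can always be realized by exactly one $\Delta$-move (one application of Claim~\ref{claim:clasp}), with everything else being ambient isotopy. This cannot be correct: by Proposition~\ref{prop:a2}, one $\Delta$-move changes $a_2$ by exactly $\pm 1$, whereas by Proposition~\ref{prop:lk} a crossing change alters $a_2$ by $lk(k_0)$, which can be any integer. For instance, the crossing change taking $C(\alpha_1,\alpha_2)$ to $C(\alpha_1,\alpha_2-2)$ has $lk(k_0)=-\tfrac12\alpha_1$, so already for $\alpha_1\ge 4$ no single $\Delta$-move can realize it.

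The flaw in your local picture is the identification of the hurdle. In Claim~\ref{claim:clasp} the clasp and the hurdle are unlinked both before and after the leap; the $\Delta$-move lets the clasp slide past an unrelated strand, it does not let it pass through the strand it is hooked around. In the paper's argument (Figure~\ref{fig:tdelta}) one introduces the clasp by isotopy near the crossing and then transports it along the diagram to the other side of that crossing; along the way it must leap over every strand it meets, and if there are $t$ such strands this costs $t$ $\Delta$-moves, after which the clasp is absorbed by isotopy and the marked crossing is switched. The number $t$ depends on the global diagram (e.g.\ $t=\tfrac12\alpha_1$ in Claim~\ref{claim:technique}), and it is this count---not a purely local one-move trick---that yields the finite bound.
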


The meaning and the proof are given in Figure \ref{fig:tdelta}. Here, the clasp leaps over $t$ hurdles by $t$ times $\Delta$-moves. Then, the crossing labeled with the asterisk * is deformed into the opposite crossing.
\begin{figure}[htbp]
 \centering
 \includegraphics[width=0.8\linewidth]{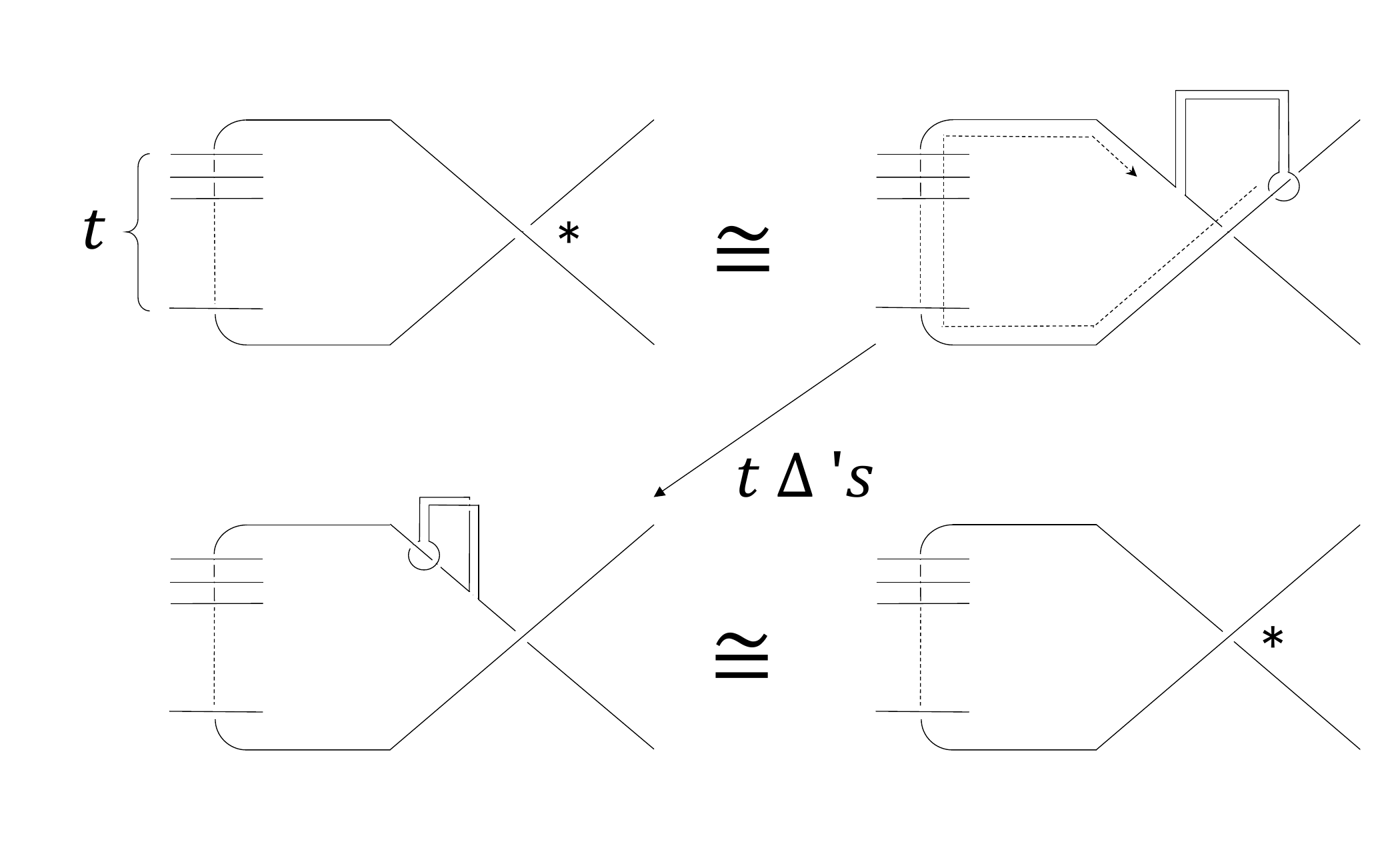}
 \caption{}
 \label{fig:tdelta}
\end{figure}

In particular, Claim \ref{claim:technique} holds for a class of two-bridge knots.
\begin{claim}\label{claim:technique}
Let $K=C(\alpha_1, \alpha_2, ... , \alpha_n)$ be a two-bridge knot, where $\alpha_1$ is a positive even integer and $\alpha_2$ is a positive integer.
In Figure \ref{fig:cct}, if we perform $\frac{1}{2} \alpha_1$ times $\Delta$-moves, we can exchange the crossing labeled with the asterisk *.
Then, we have 
$d_G^{\Delta}(C(\alpha_1, \underline{\alpha_2}, \alpha_3, ... , \alpha_n), C(\alpha_1, \underline{\alpha_2-2}, \alpha_3, ... , \alpha_n)) \leq \frac{1}{2} \alpha_1$.
\end{claim}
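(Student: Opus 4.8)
The plan is to realize the required crossing change as an instance of the clasp-over-hurdle procedure of Claims~\ref{claim:clasp} and~\ref{claim:change}, and then to count the hurdles exactly. The crossing marked by the asterisk in Figure~\ref{fig:cct} lies in the second twist region, so reversing it produces, via a single Reidemeister~II cancellation with an adjacent half-twist of the $\alpha_2$ region, precisely the diagram $C(\alpha_1,\alpha_2-2,\alpha_3,\dots,\alpha_n)$; the remaining twist regions $\alpha_3,\dots,\alpha_n$ are left untouched. This reduction step is routine, so the whole statement comes down to bounding by $\tfrac12\alpha_1$ the number of $\Delta$-moves needed to flip that one crossing.

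To do this I would set up the clasp exactly as in Claim~\ref{claim:change}: view the two strands meeting at the asterisk crossing as a clasp standing vertically on the projection sphere, positioned so that leaping it across the diagram reverses that crossing. The hurdles it must clear are the strands of the first twist region. Here the hypothesis that $\alpha_1$ is a \emph{positive even} integer is essential: the $\alpha_1$ half-twists group into $\tfrac12\alpha_1$ full twists, and each full twist presents a single hurdle to the travelling clasp. By Claim~\ref{claim:clasp}, each hurdle is cleared by one $\Delta$-move, so the clasp reaches the far side in exactly $\tfrac12\alpha_1$ $\Delta$-moves, at which point the asterisk crossing has been reversed. Combining the two steps, $\tfrac12\alpha_1$ $\Delta$-moves carry $C(\alpha_1,\alpha_2,\dots,\alpha_n)$ to $C(\alpha_1,\alpha_2-2,\alpha_3,\dots,\alpha_n)$, and since we have exhibited an explicit sequence of that length, the stated upper bound on $d_G^{\Delta}$ follows.

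The main obstacle is the geometric bookkeeping of the second step: one must verify from Figure~\ref{fig:cct} that the clasp formed at the asterisk really does encounter each of the $\tfrac12\alpha_1$ full twists of the first region as a genuine hurdle in the precise vertical-clasp/planar-hurdle configuration required by Claim~\ref{claim:clasp}, and that no other strand obstructs its passage or is inadvertently linked in the process. Making this count rigorous—rather than merely reading it off the figure—is where the parity assumption on $\alpha_1$ does its real work, since an odd $\alpha_1$ would leave a stray half-twist and spoil the clean pairing into hurdles. As a consistency check, iterating the bound $\alpha_2/2$ times reduces the second region to zero at total cost $\tfrac14\alpha_1\alpha_2$, which matches the value of $-a_2$ in Theorem~\ref{thm:even} for $C(\alpha_1,\alpha_2)$; this is the mechanism by which Claim~\ref{claim:technique} will feed into the upper-bound half of the main theorems.
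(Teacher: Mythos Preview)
Your proposal is correct and follows the same approach as the paper. The paper does not give a separate written proof of Claim~\ref{claim:technique}; it is presented as a direct specialization of Claim~\ref{claim:change} (crossing exchange via $t$ clasp-over-hurdle moves) to the two-bridge setting, with Figure~\ref{fig:cct} serving as the entire argument and exhibiting the $\tfrac{1}{2}\alpha_1$ hurdles. Your write-up reconstructs verbally exactly what that figure encodes: form the clasp at the asterisk crossing in the $\alpha_2$ region, leap it across the $\alpha_1$ region, and observe that the even twist $\alpha_1$ contributes $\tfrac{1}{2}\alpha_1$ hurdles. Your caveat that the precise hurdle count must ultimately be read off the figure is honest and appropriate, since the paper itself defers that verification to the picture.
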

\begin{figure}[htbp]
 \centering
 \includegraphics[width=1\linewidth]{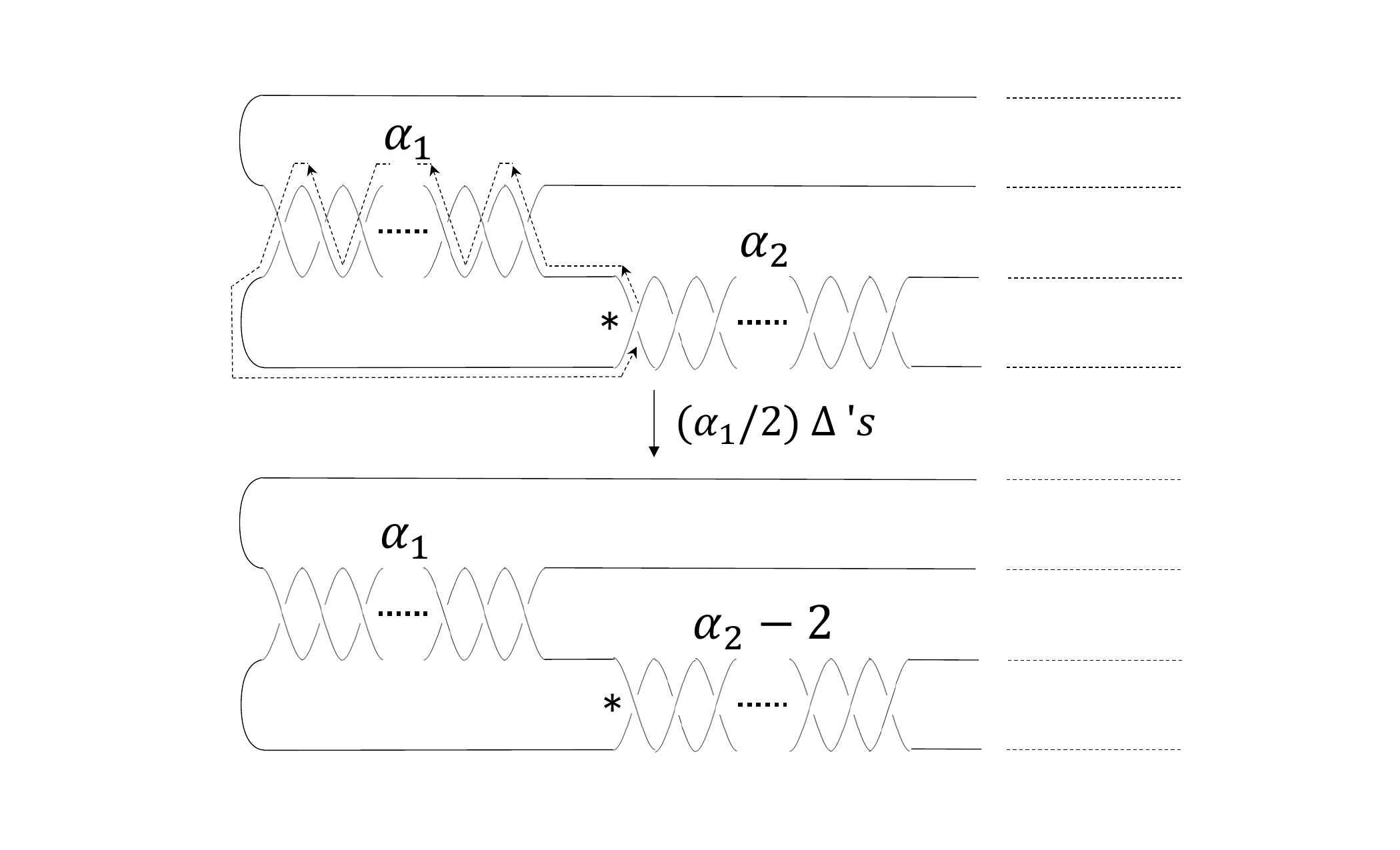}
 \caption{}
 \label{fig:cct}
\end{figure}

\vspace{1em}
\vspace{1em}
\vspace{1em}
In this paper, we use Propositions \ref{prop:lk}, \ref{prop:a2}, and \ref{prop:torus}.

\begin{proposition}\label{prop:lk}
Let $k_+$ be a knot, $k_-$ be a knot obtained from $k_+$ by exchanging a positive crossing into a negative crossing, and $k_0$ a $2$-component link obtained from $k_+$ by smoothing at that crossing. Then $a_2(k_+)-a_2(k_-) = lk(k_0)$.
\end{proposition}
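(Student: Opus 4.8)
The plan is to derive the identity directly from the Conway skein relation and compare the coefficients of $z^2$. Recall that the Conway polynomial $\nabla_K(z)$ is characterized by $\nabla_O(z)=1$ for the unknot $O$ together with the skein relation
\[
\nabla_{k_+}(z)-\nabla_{k_-}(z)=z\,\nabla_{k_0}(z),
\]
where $(k_+,k_-,k_0)$ is the usual skein triple at the distinguished crossing. Since $k_+$ and $k_-$ are knots, each has a Conway polynomial with constant term $1$ and only even-degree terms, namely $\nabla_{k_\pm}(z)=1+a_2(k_\pm)z^2+a_4(k_\pm)z^4+\cdots$. Since $k_0$ is a $2$-component link, its Conway polynomial has only odd-degree terms, so $\nabla_{k_0}(z)=a_1(k_0)z+a_3(k_0)z^3+\cdots$.

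First I would substitute these expansions into the skein relation. On the left-hand side the constant terms cancel and the lowest surviving term is $\bigl(a_2(k_+)-a_2(k_-)\bigr)z^2$; on the right-hand side, multiplication by $z$ shifts the expansion of $\nabla_{k_0}$ up by one degree, so the coefficient of $z^2$ there is exactly $a_1(k_0)$. Matching the coefficients of $z^2$ therefore yields
\[
a_2(k_+)-a_2(k_-)=a_1(k_0).
\]

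The remaining step, and the only one requiring genuine work, is the identification $a_1(k_0)=lk(k_0)$ for a $2$-component link; this is where I expect the main obstacle to lie. I would establish it by induction on the number of crossings between the two distinct components. For the base case, a split $2$-component link has vanishing Conway polynomial and linking number $0$, so the identity holds. For the inductive step, I would apply the skein relation at a crossing between the two components: the oriented smoothing there merges the components into a single knot, whose Conway polynomial has constant term $1$, so comparing coefficients of $z^1$ shows that switching such a crossing changes $a_1$ by $\pm 1$, precisely as it changes $lk$. Since $a_1$ and $lk$ respond identically to each inter-component crossing change and agree on the split link, they coincide. Substituting $a_1(k_0)=lk(k_0)$ into the displayed identity then completes the proof. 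The only point demanding care is the sign convention, to confirm that a positive-to-negative crossing change alters both $a_1$ and $lk$ in the same direction.
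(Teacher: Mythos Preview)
Your argument is correct and is the standard proof of this identity. The paper itself does not actually prove the proposition: it simply refers the reader to Kauffman's \emph{On Knots} (reference [2], Chapter III). What you have written is precisely the classical derivation one finds there --- apply the Conway skein relation, match the $z^2$-coefficients to obtain $a_2(k_+)-a_2(k_-)=a_1(k_0)$, and then identify $a_1$ of a $2$-component link with its linking number via a crossing-change induction down to a split link. So you are supplying the details that the paper omits, and your approach coincides with the cited source rather than differing from it.
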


The proof can be found in [2, Chap. III].

\begin{proposition}[\cite{Oka1}]\label{prop:a2}
For any two knots $K$ and $K^{'}$, the difference $d_G^{\Delta}(K,K^{'}) - |a_2(K) - a_2(K^{'})|$ is a non-negative even integer. 
In particular, the difference $u^{\Delta}(K) -|a_2(K)|$ is also a non-negative even integer.
\end{proposition}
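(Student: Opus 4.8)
The plan is to reduce the statement to a single local fact about $\Delta$-moves together with an elementary parity count. The local fact is: a single $\Delta$-move performed on a knot yields a knot $K'$ with $a_2(K') - a_2(K) = \pm 1$. Granting this, the proposition is nearly immediate. Note also that the second assertion is the special case $K' = O$ of the first, using $u^{\Delta}(K) = d_G^{\Delta}(K,O)$ and $a_2(O) = 0$, so it suffices to treat $d_G^{\Delta}(K,K')$.

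For the counting step, I would fix a shortest sequence of $\Delta$-moves realizing $d := d_G^{\Delta}(K,K')$, say $K = K_0 \to K_1 \to \cdots \to K_d = K'$, each $K_{i-1}\to K_i$ a single $\Delta$-move between knots. By the local fact, $a_2(K_i) - a_2(K_{i-1}) = \pm 1$ for every $i$. Let $p$ (resp. $q$) be the number of indices with $+1$ (resp. $-1$). Then $p + q = d$ and, telescoping, $a_2(K') - a_2(K) = p - q$. Hence
\[
d_G^{\Delta}(K,K') - |a_2(K') - a_2(K)| = (p+q) - |p-q| = 2\min(p,q),
\]
which is a non-negative even integer, as claimed.

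The real content is the local fact, and this is where I would concentrate the work. Since $K$ and $K'$ are both knots, their Conway polynomials are even with constant term $1$, so $\nabla_{K'}(z) - \nabla_K(z) = \big(a_2(K') - a_2(K)\big)z^2 + O(z^4)$, and it suffices to show this difference has lowest term $\pm z^2$. I would analyze the $\Delta$-tangle locally: the before/after diagrams differ only inside a disk containing the three strands of the move. Applying the Conway skein relation $\nabla_{L_+} - \nabla_{L_-} = z\,\nabla_{L_0}$ to the crossings in which the two tangles differ expresses $\nabla_{K'} - \nabla_K$ as a $\mathbb{Z}[z]$-combination of Conway polynomials of the links obtained by smoothing, and Proposition \ref{prop:lk} lets me identify the leading contribution with the linking number of a two-component link created by the move. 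The geometry of the $\Delta$-configuration forces this linking number to be $\pm 1$, which produces the $\pm z^2$ term.

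The main obstacle is precisely this local skein computation: one must resolve the crossings of the $\Delta$-tangle in the correct order and with consistent orientations, verify that the $z^0$ and $z^1$ contributions cancel (forced, since both endpoints are knots), and confirm that the surviving $z^2$ coefficient is exactly a unit rather than a larger integer. Keeping track of the orientations on the three strands, and of which smoothings yield knots versus two-component links, is the delicate part; everything downstream of the local fact is routine arithmetic.
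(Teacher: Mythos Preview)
The paper does not supply a proof of this proposition; it simply quotes the result from Okada~\cite{Oka1}. Your outline is exactly Okada's argument: she proves that a single $\Delta$-move on a knot changes $a_2$ by $\pm 1$, and the parity/counting step you wrote down is then the immediate consequence. So your proposal is correct and coincides with the source the paper cites; the only part that needs genuine care is, as you note, the local skein verification that the change is precisely $\pm 1$, and that is carried out in~\cite{Oka1}.
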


\begin{proposition}[\cite{NaNaU}]\label{prop:torus}
Let $T(p,q)$ be the torus knot of type $(p,q)$ for a pair of positive integers $p,q$ with $\gcd(p,q)=1$.
Then we have 
\[
u^{\Delta}(T(p,q)) = \frac{1}{24} (p^2-1)(q^2-1) \quad \big( = a_2(T(p,q)) \big).
\]
\end{proposition}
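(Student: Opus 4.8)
The plan is to bracket $u^{\Delta}(T(p,q))$ between a lower and an upper bound that agree, using the fact (which underlies Proposition~\ref{prop:a2}) that a single $\Delta$-move changes $a_2$ by exactly $\pm 1$. Since $a_2(O)=0$, any sequence of $N$ $\Delta$-moves unknotting a knot $K$ yields $a_2(K)=\sum_{i=1}^{N}\varepsilon_i$ with each $\varepsilon_i\in\{\pm1\}$, so $N\geq|a_2(K)|$; for $K=T(p,q)$ this is precisely the lower bound $u^{\Delta}(T(p,q))\geq|a_2(T(p,q))|$ furnished by Proposition~\ref{prop:a2}. It therefore suffices to (i) verify $a_2(T(p,q))=\tfrac{1}{24}(p^2-1)(q^2-1)$, which is positive for a nontrivial torus knot (so that $|a_2|=a_2$), and (ii) exhibit an unknotting sequence of exactly $\tfrac{1}{24}(p^2-1)(q^2-1)$ $\Delta$-moves, each decreasing $a_2$ by one.

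For step (i) I would compute $a_2$ recursively from the skein relation in Proposition~\ref{prop:lk}. Realizing $T(p,q)$ as the closure of the positive braid $(\sigma_1\cdots\sigma_{p-1})^{q}$ on $p$ strands, I would change one crossing and smooth it: the smoothed diagram is a torus link whose linking number is read off directly, while the crossing change, after a Markov reduction, relates $T(p,q)$ to a torus knot with smaller parameters. This gives a recursion in $q$ (with $p$ fixed) whose unique solution with initial data $a_2(T(p,1))=a_2(T(1,q))=0$ is the claimed quartic; solving it is routine.

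Step (ii) is the crux. The idea is to descend along the Euclidean algorithm: using $T(p,q)\cong T(q,p)$, I would pass from $T(p,q)$ to $T(p,q-p)$ when $q>p$ (and symmetrically reduce $p$ when $q<p$), iterating down to $T(p,1)=O$. Concretely, I would locate in the standard torus diagram configurations of three parallel strands to which Claims~\ref{claim:clasp} and~\ref{claim:change} apply and perform $\Delta$-moves that strip off one band of the torus knot while keeping every intermediate diagram of torus type. Evaluating the linking number of the relevant smoothed $2$-component link via Proposition~\ref{prop:lk}, each such move is checked to change $a_2$ by exactly $-1$, so the number of moves in one Euclidean step equals $a_2(T(p,q))-a_2(T(p,q-p))=\tfrac{p(p^2-1)(2q-p)}{24}$, which one verifies to be a positive integer. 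Telescoping over the descent produces an unknotting sequence of total length $a_2(T(p,q))-a_2(O)=\tfrac{1}{24}(p^2-1)(q^2-1)$, giving the upper bound; together with the lower bound this forces equality.

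The main obstacle will be the explicit geometric construction in step (ii): for each Euclidean step one must produce a concrete family of $\Delta$-moves that is realizable in the diagram, returns a genuine torus-knot diagram, and decreases $a_2$ by exactly one per move, so that the move count matches the difference of $a_2$-values with no wasted moves. Establishing this last property is where Proposition~\ref{prop:lk} and a careful linking-number bookkeeping enter, and maintaining control of the intermediate diagrams through the symmetry $T(p,q)\cong T(q,p)$ is the delicate part.
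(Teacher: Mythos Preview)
The paper does not prove Proposition~\ref{prop:torus}; it is simply quoted from \cite{NaNaU} and used only in the special case $u^{\Delta}(T(2,q))=\tfrac{1}{8}(q^2-1)$. So there is no proof in this paper to compare your proposal against.

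That said, your two-sided bounding strategy is exactly the template the paper (and \cite{NaNaU}) uses elsewhere: the lower bound is Proposition~\ref{prop:a2}, and the upper bound comes from an explicit $\Delta$-unknotting sequence whose length is then shown to equal $|a_2|$. Your Euclidean-algorithm descent $T(p,q)\to T(p,q-p)\to\cdots$ is a reasonable organizing principle for the upper bound, and your computation of the $a_2$-difference along one step is correct.

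One methodological comment: you propose to check that \emph{each individual} $\Delta$-move in the sequence decreases $a_2$ by exactly $1$. This is more than you need and harder than the alternative. In the paper's own arguments (Theorems~\ref{thm:even} and~\ref{thm:odd}) one never tracks $a_2$ move-by-move; instead one (a) bounds the number of $\Delta$-moves needed for each crossing change via Claim~\ref{claim:change}/\ref{claim:technique}, (b) sums these bounds over the whole sequence, and (c) computes $a_2$ independently via Proposition~\ref{prop:lk} and checks that the two totals agree. Once the total move count equals $|a_2|$, the fact that every move shifted $a_2$ by $-1$ is automatic. Reorganizing your step~(ii) this way would remove the ``careful linking-number bookkeeping'' you flag as the main obstacle: you only need the linking numbers to compute $a_2$, not to certify each move.

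The genuine remaining work, which your proposal acknowledges but does not carry out, is the diagrammatic construction: exhibiting, for one Euclidean step, a sequence of crossing changes taking a torus-knot diagram to a smaller torus-knot diagram, and bounding the number of $\Delta$-moves per crossing change (via the hurdle count in Claim~\ref{claim:change}) so that the total matches the $a_2$-difference. That is the content of \cite{NaNaU}.
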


By deforming a two-bridge knot into $T(2,q)$, we can apply the formula
\[
u^{\Delta}(T(2,q)) = \frac{1}{8} (q^2-1).
\]

\vspace{1em}
\section{Two-bridge knots with $u^{\Delta}(K)=|a_2(K)|$}\label{sec3}

We begin by considering the two-bridge knot $K = C(\alpha_1)$, where $\alpha_1$ is a positive odd integer. 
It is well known that $C(\alpha_1) \cong T(2, \alpha_1)$, and hence we have $u^{\Delta}(K) = |a_2(K)|$.

Next, we investigate the two-bridge knot $K = C(\alpha_1, \alpha_2)$ with $\alpha_1, \alpha_2 > 0$.
Our goal in this section is to show that $u^{\Delta}(K) = |a_2(K)|$.

To this end, we first determine the $\Delta$-unknotting numbers for two-bridge knots of type 
$C(2\beta_1, 2\beta_2, ... ,  2\beta_n)$ and type $C(2\beta_1, 2\beta_2, ... , 2\beta_{n-1}, 2\beta_n-1)$, where $\beta_i$ is a positive integer for $1 \leq i \leq n$.

\subsection{Type $C(2\beta_1, 2\beta_2, ... ,  2\beta_n)$}\label{subsec31}

\even*

\begin{proof}
Let $k_+ = C(\alpha_1, \alpha_2, \alpha_3, ... , \alpha_n)$ and $k_- = C(\alpha_1, \alpha_2-2, \alpha_3, ... , \alpha_n)$. By Proposition \ref{prop:lk}, $a_2(C(\alpha_1, \alpha_2, \alpha_3, ... , \alpha_n)) - a_2(C(\alpha_1, \alpha_2-2, \alpha_3, ... , \alpha_n)) = lk(k_0) = - \frac{1}{2} \alpha_1$.

By repeating the same computation $\frac{1}{2}\alpha_2$ times, we have
\begin{align*}
a_2(C(\alpha_1, \underline{\alpha_2}, \alpha_3, ... , \alpha_n)) & - a_2(C(\alpha_1, \underline{\alpha_2-2}, \alpha_3, ..., \alpha_n)) = - \frac{1}{2} \alpha_1 \quad, ... , \\
a_2(C(\alpha_1, \underline{2}, \alpha_3, ... , \alpha_n)) & - a_2(C(\alpha_1, \underline{0}, \alpha_3, ... , \alpha_n)) = - \frac{1}{2} \alpha_1 .
\end{align*}

Here, $C(\alpha_1, 0, \alpha_3, \alpha_4, ... , \alpha_n) \cong C(\alpha_1 + \alpha_3, \alpha_4, ... , \alpha_n)$.

By continuing with another $\frac{1}{2} \alpha_4$ steps, we have
\begin{align*}
a_2(C(\alpha_1 + \alpha_3, \underline{\alpha_4}, \alpha_5, ... , \alpha_n)) & - a_2(C(\alpha_1 + \alpha_3, \underline{\alpha_4-2}, \alpha_5, ... , \alpha_n)) \\
& = - \frac{1}{2} ( \alpha_1 + \alpha_3 )  \quad, ... , \\
a_2(C(\alpha_1 + \alpha_3, \underline{2}, \alpha_5, ... , \alpha_n)) & - a_2(C(\alpha_1 + \alpha_3, \underline{0}, \alpha_5, ... , \alpha_n)) \\
& = - \frac{1}{2} ( \alpha_1 + \alpha_3 ) .
\end{align*}

Here, $C(\alpha_1 + \alpha_3, 0, \alpha_5, \alpha_6, ... , \alpha_n) \cong C(\alpha_1 + \alpha_3 + \alpha_5, \alpha_6, ... , \alpha_n)$.

Proceeding further, we have 
\begin{align*}
a_2(C(\alpha_1 + \alpha_3 + ... + \alpha_{n-1}, \underline{\alpha_n})) & - a_2(C(\alpha_1 + \alpha_3 + ... + \alpha_{n-1}, \underline{\alpha_n-2})) \\
& = - \frac{1}{2} ( \alpha_1 + \alpha_3 + ... + \alpha_{n-1} )
\quad, ... ,  \\
a_2(C(\alpha_1 + \alpha_3 + ... + \alpha_{n-1}, \underline{2})) & - a_2(C(\alpha_1 + \alpha_3 + ... + \alpha_{n-1}, \underline{0})) \\
& = - \frac{1}{2} ( \alpha_1 + \alpha_3 + ... + \alpha_{n-1} ) .
\end{align*}

Here, $C(\alpha_1 + \alpha_3 + ... + \alpha_{n-1}, 0) \cong O$, hence $a_2(C(\alpha_1 + \alpha_3 + ... + \alpha_{n-1}, 0) ) = 0$.

By summing up, we obtain 
\begin{align*}
& a_2(C(\alpha_1, \alpha_2, ... , \alpha_n)) \\
& = - \frac{1}{4} \{ \alpha_1 \alpha_2 + (\alpha_1 + \alpha_3)\alpha_4 + ... + (\alpha_1 + \alpha_3 + ... + \alpha_{n-1}) \alpha_n \} \\
& = - \frac{1}{4} \sum_{j=1}^{n/2} \sum_{i=1}^{j} \alpha_{2i-1}\alpha_{2j} .
\end{align*}

On the other hand, if we perform $\frac{1}{2} \alpha_1$ times $\Delta$-moves (Claim \ref{claim:technique}), we have $d_G^{\Delta}(C(\alpha_1, \alpha_2, \alpha_3, ... , \alpha_n), C(\alpha_1, \alpha_2-2, \alpha_3, ... , \alpha_n)) \leq \frac{1}{2} \alpha_1$. 

By repeating the same computation $\frac{1}{2} \alpha_2$ times, we have
\begin{gather*}
d_G^{\Delta}(C(\alpha_1, \underline{\alpha_2}, \alpha_3, ... , \alpha_n), C(\alpha_1, \underline{\alpha_2-2}, \alpha_3, ... , \alpha_n)) \leq \frac{1}{2} \alpha_1 \quad, ... , \\
d_G^{\Delta}(C(\alpha_1, \underline{2}, \alpha_3, ... , \alpha_n), C(\alpha_1, \underline{0}, \alpha_3, ... , \alpha_n)) \leq \frac{1}{2} \alpha_1 . 
\end{gather*}

By continuing with another $\frac{1}{2} \alpha_4$ steps, we have
\begin{gather*}
d_G^{\Delta}(C(\alpha_1 + \alpha_3, \underline{\alpha_4}, \alpha_5, ... , \alpha_n), C(\alpha_1 + \alpha_3, \underline{\alpha_4-2}, \alpha_5, ... , \alpha_n)) \leq \frac{1}{2} ( \alpha_1 + \alpha_3 ) \quad, ... , \\
d_G^{\Delta}(C(\alpha_1 + \alpha_3, \underline{2}, \alpha_5, ... , \alpha_n), C(\alpha_1 + \alpha_3, \underline{0}, \alpha_5, ... , \alpha_n)) \leq \frac{1}{2} ( \alpha_1 + \alpha_3 ). 
\end{gather*}

Proceeding further, we have 
\begin{gather*}
d_G^{\Delta}(C(\alpha_1 + \alpha_3 + ... + \alpha_{n-1}, \underline{\alpha_n}), C(\alpha_1 + \alpha_3 + ... + \alpha_{n-1}, \underline{\alpha_n-2}) ) \\
\leq \frac{1}{2} ( \alpha_1 + \alpha_3 + ... + \alpha_{n-1} ) \quad, ... , \\
d_G^{\Delta}(C(\alpha_1 + \alpha_3 + ... + \alpha_{n-1}, \underline{2}), C(\alpha_1 + \alpha_3 + ... + \alpha_{n-1}, \underline{0}))  \\
\leq \frac{1}{2} ( \alpha_1 + \alpha_3 + ... + \alpha_{n-1} ) .
\end{gather*}

By summing up, we obtain 
\begin{align*}
& d_G^{\Delta}(C(\alpha_1, \alpha_2, ... , \alpha_n), O) \\
& \leq d_G^{\Delta}(C(\alpha_1, \alpha_2, ... , \alpha_n), C(\alpha_1, \alpha_2-2, ... , \alpha_n)) + ... \\
& + d_G^{\Delta}(C(\alpha_1 + \alpha_3 + ... + \alpha_{n-1}, 2), C(\alpha_1 + \alpha_3 + ... + \alpha_{n-1}, 0)) \\
& \leq \frac{1}{4} \{ \alpha_1 \alpha_2 + (\alpha_1 + \alpha_3)\alpha_4 + ... + (\alpha_1 + \alpha_3 +... + \alpha_{n-1})\alpha_n \} \\
& = -a_2(K) = |a_2(K)|.
\end{align*}

Therefore, we have $u^{\Delta}(K) \leq |a_2(K)|$.

By Proposition \ref{prop:a2}, we also have $u^{\Delta}(K) \geq |a_2(K)|$. 
Thus, we conclude that $u^{\Delta}(K)=|a_2(K)|$.
The proof is complete.
\end{proof}

\subsection{Type $C(2\beta_1, 2\beta_2, ... , 2\beta_{n-1}, 2\beta_n-1)$}\label{subsec32}

\odd*

\begin{proof}
Proceeding in the same way as in the proof of Theorem \ref{thm:even}, we obtain Theorem \ref{thm:odd}.

(1) If $n$ is even, then we have
\begin{align*}
a_2(C(\alpha_1, \alpha_2, \alpha_3, ... , \alpha_n)) & - a_2(C(\alpha_1, \alpha_2-2, \alpha_3, ..., \alpha_n)) =  \frac{1}{2} \alpha_1 \quad, ... , \\
a_2(C(\alpha_1 + \alpha_3 + ... + \alpha_{n-1}, \underline {3})) & - a_2(C(\alpha_1 + \alpha_3 + ... + \alpha_{n-1}, \underline {1})) \\
& = \frac{1}{2} ( \alpha_1 + \alpha_3 + ... + \alpha_{n-1} ).
\end{align*}

Here, $C(\alpha_1 + \alpha_3 + ... + \alpha_{n-1} , 1) \cong C(\alpha_1 + \alpha_3 + ... + \alpha_{n-1} + 1) \cong T(2, \alpha_1 + \alpha_3 + ... + \alpha_{n-1} + 1)$, 
hence $a_2(C(\alpha_1 + \alpha_3 + ... + \alpha_{n-1}, 1) ) = \frac{1}{8} \{ ( \alpha_1 + \alpha_3 + ... + \alpha_{n-1} + 1 )^2 -1 \}$.

By summing up, we obtain 
\begin{align*}
& a_2(C(\alpha_1, \alpha_2, ... , \alpha_n)) \\
& = \frac{1}{4} \{ \alpha_1 \alpha_2 + (\alpha_1 + \alpha_3)\alpha_4 + ... + ( \alpha_1 + \alpha_3 + ... + \alpha_{n-3} ) \alpha_{n-2} \\
& + ( \alpha_1  +  \alpha_3 + ... + \alpha_{n-1} ) (\alpha_n-1) \} 
+ \frac{1}{8} \{ ( \alpha_1 + \alpha_3 + ... + \alpha_{n-1} + 1 )^2 -1 \}\\
& = \frac{1}{4} \sum_{j=1}^{n/2} \sum_{i=1}^{j} \alpha_{2i-1}\alpha_{2j} 
+ \frac{1}{8} ( \sum_{i=1}^{n/2} \alpha_{2i-1} )^{2}.
\end{align*}

On the other hand, we perform $\Delta$-moves (Claim \ref{claim:technique}), we have
\begin{gather*}
d_G^{\Delta}(C(\alpha_1, \alpha_2, \alpha_3, ... , \alpha_n), C(\alpha_1, \alpha_2-2, \alpha_3, ... , \alpha_n)) \leq \frac{1}{2} \alpha_1 \quad, ... , \\
d_G^{\Delta}(C(\alpha_1 + \alpha_3 + ... + \alpha_{n-1}, \underline{3}), C(\alpha_1 + \alpha_3 + ... + \alpha_{n-1}, \underline{1}))  \\
\leq \frac{1}{2} ( \alpha_1 + \alpha_3 + ... + \alpha_{n-1} ) .
\end{gather*}

By summing up, we obtain 
\begin{align*}
& d_G^{\Delta}(C(\alpha_1, \alpha_2, ... , \alpha_n), O) \\
& \leq d_G^{\Delta}(C(\alpha_1, \alpha_2, ... , \alpha_n), C(\alpha_1, \alpha_2-2, ... , \alpha_n)) + ... \\
& + d_G^{\Delta}(C(\alpha_1 + \alpha_3 + ... + \alpha_{n-1}, 3), C(\alpha_1 + \alpha_3 + ... + \alpha_{n-1}, 1)) \\
& + d_G^{\Delta}(C(\alpha_1 + \alpha_3 + ... + \alpha_{n-1}, 1), O) \\
& \leq \frac{1}{4} \{ \alpha_1 \alpha_2 + (\alpha_1 + \alpha_3)\alpha_4 + ... + ( \alpha_1 + \alpha_3 + ... + \alpha_{n-3} ) \alpha_{n-2} \\
& + ( \alpha_1  +  \alpha_3 + ... + \alpha_{n-1} ) (\alpha_n-1) \} 
+ \frac{1}{8} \{ (\alpha_1 + \alpha_3 + ... + \alpha_{n-1} + 1)^2 -1 \}\\
& = \frac{1}{4} \sum_{j=1}^{n/2} \sum_{i=1}^{j} \alpha_{2i-1}\alpha_{2j} + \frac{1}{8} ( \sum_{i=1}^{n/2} \alpha_{2i-1} )^{2} = a_2(K) = |a_2(K)|.
\end{align*}

Therefore, we have $u^{\Delta}(K) \leq |a_2(K)|$.

By Proposition \ref{prop:a2}, we also have $u^{\Delta}(K) \geq |a_2(K)|$. 
Thus, we conclude that $u^{\Delta}(K)=|a_2(K)|$. \\

(2) If $n$ =1 ($K=C(\alpha_1)$), then $\alpha_1$ is a positive odd integer. Here, $K=C(\alpha_1) \cong T(2, \alpha_1)$. Then, we have $u^{\Delta}(K) = \frac{1}{8} (\alpha_1^2 - 1)$.

If $n$ is odd and $n \geq 3$, then we have
\begin{align*}
a_2(C(\alpha_1, \alpha_2, \alpha_3, ... , \alpha_n)) & - a_2(C(\alpha_1, \alpha_2-2, \alpha_3, ..., \alpha_n)) =  \frac{1}{2} \alpha_1 \quad, ... , \\
a_2(C(\alpha_1 + \alpha_3 + ... + \alpha_{n-2}, \underline {2}, \alpha_n)) & - a_2(C(\alpha_1 + \alpha_3 + ... + \alpha_{n-2}, \underline {0}, \alpha_n)) \\
& = \frac{1}{2} ( \alpha_1 + \alpha_3 + ... + \alpha_{n-2} ).
\end{align*}

Here, $C(\alpha_1 + \alpha_3 + ... + \alpha_{n-2} , 0, \alpha_n) \cong C(\alpha_1 + \alpha_3 + ... + \alpha_{n-2} + \alpha_n) \cong  T(2, \alpha_1 + \alpha_3 + ... + \alpha_{n-2} + \alpha_n)$,
hence $a_2(C(\alpha_1 + \alpha_3 + ... + \alpha_{n-2}, 0, \alpha_n )) = \frac{1}{8} \{ (\alpha_1 + \alpha_3 + ... + \alpha_{n-2} + \alpha_n )^2 -1 \}$.

By summing up, we obtain 
\begin{align*}
& a_2(C(\alpha_1, \alpha_2, ... , \alpha_n)) \\
& = \frac{1}{4} \{ \alpha_1 \alpha_2 + (\alpha_1 + \alpha_3)\alpha_4 + ... + ( \alpha_1 + \alpha_3 + ... + \alpha_{n-2} ) \alpha_{n-1} \} \\
& + \frac{1}{8} \{ ( \alpha_1 + \alpha_3 + ... + \alpha_n )^2 -1 \}\\
& = \frac{1}{4} \sum_{j=1}^{(n-1)/2} \sum_{i=1}^{j} \alpha_{2i-1}\alpha_{2j} + \frac{1}{8} \{ ( \sum_{i=1}^{(n+1)/2} \alpha_{2i-1} )^{2} -1 \} .
\end{align*}

On the other hand, we perform $\Delta$-moves (Claim \ref{claim:technique}), we have
\begin{gather*}
d_G^{\Delta}(C(\alpha_1, \alpha_2, \alpha_3, ... , \alpha_n), C(\alpha_1, \alpha_2-2, \alpha_3, ... , \alpha_n)) \leq \frac{1}{2} \alpha_1 \quad, ... , \\
d_G^{\Delta}(C(\alpha_1 + \alpha_3 + ... + \alpha_{n-2}, 2, \alpha_n), C(\alpha_1 + \alpha_3 + ... + \alpha_{n-2}, 0, \alpha_n))  \\
\leq \frac{1}{2} ( \alpha_1 + \alpha_3 + ... + \alpha_{n-2} ) .
\end{gather*}

By summing up, we obtain 
\begin{align*}
& d_G^{\Delta}(C(\alpha_1, \alpha_2, ... , \alpha_n), O) \\
& \leq d_G^{\Delta}(C(\alpha_1, \alpha_2, ... , \alpha_n), C(\alpha_1, \alpha_2-2, ... , \alpha_n)) + ... \\
& + d_G^{\Delta}(C(\alpha_1 + \alpha_3 + ... + \alpha_{n-2}, 2, \alpha_n), C(\alpha_1 + \alpha_3 + ... + \alpha_{n-2}, 0, \alpha_n)) \\
& + d_G^{\Delta}(C(\alpha_1 + \alpha_3 + ... + \alpha_{n-2}, 0, \alpha_n), O) \\
& \leq \frac{1}{4} \{ \alpha_1 \alpha_2 + (\alpha_1 + \alpha_3)\alpha_4 + ... + ( \alpha_1  +  \alpha_3 + ... + \alpha_{n-2} ) \alpha_{n-1} \}  \\
& + \frac{1}{8} \{ (\alpha_1 + \alpha_3 + ... + \alpha_{n-2} + \alpha_n)^2 -1 \}\\
& = \frac{1}{4} \sum_{j=1}^{(n-1)/2} \sum_{i=1}^{j} \alpha_{2i-1}\alpha_{2j} 
+ \frac{1}{8} \{ ( \sum_{i=1}^{(n+1)/2} \alpha_{2i-1} )^{2} -1 \}  = a_2(K) = |a_2(K)|.
\end{align*}

Therefore, we have $u^{\Delta}(K) \leq |a_2(K)|$.

By Proposition \ref{prop:a2}, we also have $u^{\Delta}(K) \geq |a_2(K)|$. 
Thus, we conclude that $u^{\Delta}(K)=|a_2(K)|$.

The proof is complete.
\end{proof}

\subsection{Type $C(\alpha_1, \alpha_2)$}\label{subsec33}

\begin{corollary}\label{cor:a1a2}
Let $K=C(\alpha_1, \alpha_2)$ be a two-bridge knot, where $\alpha_1 > 0$ and $\alpha_2 > 0$.
Then, we have $u^{\Delta}(K)=|a_2(K)|$.
\begin{enumerate}
    \item
    If both $\alpha_1$ and $\alpha_2$ are even integers, then
    
    \[
    u^{\Delta}(K) = \frac{1}{4}\alpha_1 \alpha_2 \quad \big(= -a_2(K)\big).
    \]
    \item
    If $\alpha_1$ is an even integer and $\alpha_2$ is an odd integer, then
    
    \[
    u^{\Delta}(K) = \frac{1}{8}\alpha_1(\alpha_1 + 2\alpha_2) \quad \big(= a_2(K)\big).
    \]
\end{enumerate}
\end{corollary}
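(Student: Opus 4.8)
The plan is to obtain this corollary as the special case $n = 2$ of Theorems \ref{thm:even} and \ref{thm:odd}, so that essentially no new argument is needed beyond substituting $n = 2$ into the two formulas and simplifying. Since those theorems already give $u^{\Delta}(K) = |a_2(K)|$ for all even $n$ (even case) and all $n$ (odd case), the equality $u^{\Delta}(K) = |a_2(K)|$ asserted here will be immediate once the index is specialized.

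First, for part (1), where both $\alpha_1$ and $\alpha_2$ are positive even integers, I would apply Theorem \ref{thm:even} with $n = 2$ (which is even, as required). The double sum collapses to its single term $i = j = 1$, giving
\[
u^{\Delta}(K) = \frac{1}{4}\sum_{j=1}^{1}\sum_{i=1}^{j}\alpha_{2i-1}\alpha_{2j} = \frac{1}{4}\alpha_1\alpha_2,
\]
which is the claimed value and equals $-a_2(K)$ by the same theorem. Next, for part (2), where $\alpha_1$ is even and $\alpha_2$ is odd, the hypotheses match Theorem \ref{thm:odd} with $n = 2$: indeed $\alpha_i$ is even for $1 \leq i \leq n-1 = 1$ and $\alpha_n = \alpha_2$ is odd. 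Since $n = 2$ is even, case (1) of that theorem applies, and both sums again reduce to a single term, yielding
\[
u^{\Delta}(K) = \frac{1}{4}\alpha_1\alpha_2 + \frac{1}{8}\alpha_1^{2} = \frac{1}{8}\alpha_1(\alpha_1 + 2\alpha_2)
\]
after factoring; this matches $a_2(K)$ as asserted.

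The only point deserving any care — and hence the single (modest) obstacle — is the bookkeeping needed to confirm that the parity hypotheses of the corollary align exactly with those of the two theorems: that the mixed-parity case ($\alpha_1$ even, $\alpha_2$ odd) falls under the even-$n$ branch of Theorem \ref{thm:odd} rather than being a separate situation, and that $C(\alpha_1,\alpha_2)$ is genuinely a knot under each set of hypotheses so that the theorems may be invoked. Once this is checked, the equalities $u^{\Delta}(K) = |a_2(K)|$ in both cases follow directly from Theorems \ref{thm:even} and \ref{thm:odd}, completing the proof.
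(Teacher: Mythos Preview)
Your proposal is correct and is essentially the same as the paper's own proof: both simply specialize Theorems~\ref{thm:even} and~\ref{thm:odd}(1) to $n=2$, collapse the sums to their single term, and read off the two formulas. The paper's version is terser, omitting your parity and knot checks, but the argument is identical.
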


\begin{proof}
By Theorem \ref{thm:even}, if $n=2$, then $u^{\Delta}(K) = \frac{1}{4}\alpha_1 \alpha_2 \quad \big(= -a_2(K)\big)$. 

By Theorem \ref{thm:odd}(1), if $n=2$, then
$u^{\Delta}(K) = \frac{1}{8}\alpha_1(\alpha_1 + 2\alpha_2) \quad \big(= a_2(K)\big)$.

The proof is complete.
\end{proof}

\vspace{1em}
\section{Two-bridge knots with $u^{\Delta}(K) \neq |a_2(K)|$}\label{sec4}

Let $K=C(\alpha_1, \alpha_2, \alpha_3)$ be a two-bridge knot with at most 10 crossings. Among such knots, only $8_2, 10_2$, and $10_6$ satisfy the inequality $u^{\Delta}(K)\neq|a_2(K)|$.
Specifically, $8_2 \cong C(5, 1, 2)$, $10_2 \cong C(7, 1, 2)$, and $10_6 \cong C(5, 3, 2)$. 

In this section, we investigate two-bridge knots with $u^{\Delta}(K) \neq |a_2(K)|$.

\subsection{Type $C(2\beta_1, 2\beta_2-1, 2\beta_3-1)$}\label{subsec41}

\begin{example}\label{ex:a1a2a3}
Let $K=C(\alpha_1, \alpha_2, \alpha_3)$ be a two-bridge knot, where $\alpha_1$ is a positive even integer, and both $\alpha_2$ and $\alpha_3$ are positive odd integers. Define $v^{\Delta}(K)=\frac{1}{4} \alpha_1(\alpha_2 + \alpha_3) + 
\frac{1}{8} (\alpha_3^2 -8\alpha_3 + 7)$.
Then, we have
\begin{align*}
& a_2(K) = -\frac{1}{4} \alpha_1(\alpha_2 + \alpha_3) 
+ \frac{1}{8} (\alpha_3^2 -1), \\
& u^{\Delta}(K) \leq v^{\Delta}(K) 
\quad \big(\leq  \frac{1}{4} \alpha_1(\alpha_2 + \alpha_3) + \frac{1}{8} (\alpha_3^2 -1) \big).
\end{align*}
\begin{enumerate}
    \item
    If $\alpha_3 = 1$ or $3$, then $|a_2(K)| = v^{\Delta}(K)$. Then, we have $u^{\Delta}(K) = |a_2(K)|$.

    \item
     If $\alpha_3 \geq 5$, then $|a_2(K)| < v^{\Delta}(K)$.

Let $K=C(2, 1, \alpha_3)$ or $C(2, \alpha_2, 5)$. Then, we have $v^{\Delta}(K) = |a_2(K)| + 2$, so that $ u^{\Delta}(K) = |a_2(K)|$ or $|a_2(K)| + 2$.

In fact, we have the following examples:
\begin{align*}
u^{\Delta}(C(2,1,5)) = |a_2(C(2,1,5))| + 2, \\
u^{\Delta}(C(2,1,7)) = |a_2(C(2,1,7))| + 2, \\
u^{\Delta}(C(2,3,5)) = |a_2(C(2,3,5))| + 2.
\end{align*}
\end{enumerate}
\end{example}

\vspace{1em}
Proceeding in the same way as in the proof of Theorem \ref{thm:even}, we have 
\begin{align*}
a_2(C(\alpha_1, \alpha_2, \alpha_3)) & - a_2(C(\alpha_1 - 2, \alpha_2, \alpha_3)) = -\frac{1}{2} (\alpha_2 + \alpha_3) \quad, ... , \\
a_2(C(2, \alpha_2, \alpha_3)) & - a_2(C(0, \alpha_2, \alpha_3)) = -\frac{1}{2} (\alpha_2 + \alpha_3) .
\end{align*}

Here, $C(0, \alpha_2, \alpha_3) \cong C(\alpha_3)$.

Proceeding further, we have 
\begin{align*}
a_2(C(\alpha_3)) & - a_2(C(\alpha_3 - 2)) = \frac{1}{2} (\alpha_3 - 1) \quad, ... , \\
a_2(C(5)) & - a_2(C(3)) = \frac{1}{2} (5 - 1) , \\
a_2(C(3)) & - a_2(C(1)) = \frac{1}{2} (3 - 1) .
\end{align*}

Here, $C(1) \cong O$, hence $a_2(C(1)) = 0$.

By summing up, we obtain 
\begin{align*}
a_2(C(\alpha_1, \alpha_2, \alpha_3)) & = -\frac{1}{4} \alpha_1(\alpha_2 + \alpha_3) + \{ \frac{1}{2} (\alpha_3 - 1) + ... + 2 +1 \} \\
& = -\frac{1}{4} \alpha_1(\alpha_2 + \alpha_3) + 
\frac{1}{8} (\alpha_3^2 -1).
\end{align*}

On the other hand, we perform $\Delta$-moves (Claim \ref{claim:technique}), we have
\begin{gather*}
d_G^{\Delta}(C(\alpha_1, \alpha_2, \alpha_3), C(\alpha_1, \alpha_2-2, \alpha_3)) \leq \frac{1}{2} \alpha_1 \quad, ... , \\
d_G^{\Delta}(C(\alpha_1, 1, \alpha_3), C(\alpha_1, -1, \alpha_3)) \leq \frac{1}{2} \alpha_1.
\end{gather*}

Here, $C(\alpha_1, -1, \alpha_3) \cong C(\alpha_1 - 1, 1 - \alpha_3)$.

Proceeding further, we have 
\begin{gather*}
d_G^{\Delta}(C(\alpha_1 - 1, 1 - \alpha_3), C(\alpha_1 - 3, 1 - \alpha_3)) \leq \frac{1}{2} (\alpha_3 -1) \quad, ... , \\
d_G^{\Delta}(C(3, 1 - \alpha_3), C(1, 1 - \alpha_3 )) \leq \frac{1}{2} (\alpha_3 - 1).
\end{gather*}

Here, $C(1, 1 - \alpha_3) \cong T(2, \alpha_3 -2)$, hence $d_G^{\Delta}(C(1, 1 - \alpha_3), O) = \frac{1}{8} \{ (\alpha_3 - 2)^2 - 1\}$.

By summing up, we obtain 
\begin{align*}
& d_G^{\Delta}(C(\alpha_1, \alpha_2, \alpha_3), O) \\
& \leq d_G^{\Delta}(C(\alpha_1, \alpha_2, \alpha_3), C(\alpha_1, \alpha_2 - 2, \alpha_3)) + ... \\
& + d_G^{\Delta}(C(\alpha_1, 1, \alpha_3), C(\alpha_1, -1, \alpha_3)) \\
& + d_G^{\Delta}(C(\alpha_1 - 1, 1 - \alpha_3), C(\alpha_1 - 3, 1 - \alpha_3))  + ... \\
& + d_G^{\Delta}(C(3, 1 - \alpha_3)), C(1, 1 -\alpha_3)) \\
& + d_G^{\Delta}(C(1, 1 - \alpha_3)), O) \\
& \leq \frac{1}{4} \{ \alpha_1 ( \alpha_2 + 1 ) + (\alpha_3 - 1)(\alpha_1 - 2) \}  \\
& + \frac{1}{8} \{ (\alpha_3 - 2)^2 - 1\} \\
& =\frac{1}{4} \alpha_1(\alpha_2 + \alpha_3) + 
\frac{1}{8} (\alpha_3^2 -8\alpha_3 + 7) = v^{\Delta}(K).
\end{align*}

\subsection{Type $C(3, m, 2, 1, 2)$}\label{subsec42}

\begin{example}\label{ex:3m212}
Let $K=C(3, m, 2, 1, 2)$ be a two-bridge knot, where $m$ is a positive even integer.
Then, we have $u^{\Delta}(K) = 2$ and $a_2(K) = 0$, so that $u^{\Delta}(K) \ne |a_2(K)|$.
\end{example}

Proceeding in the same way as in the proof of Theorem \ref{thm:even}, we have 
\begin{align*}
a_2(C(3, m, 2, 1, 2)) & - a_2(C(3, m-2, 2, 1, 2)) = 0 \quad, ... , \\
a_2(C(3, 4, 2, 1, 2)) & - a_2(C(3, 2, 2, 1, 2)) = 0.
\end{align*}

By summing up, we obtain 
\begin{align*}
a_2(C(3, m, 2, 1, 2) )- a_2(C(3, 2, 2, 1, 2)) = 0.
\end{align*}

Here, $C(3, 2, 2, 1, 2) \cong 10_{25}$, hence $a_2(C(3, 2, 2, 1, 2)) = 0$.

Thus, we have $a_2(K) = 0$.

On the other hand, as illustrated in Figure \ref{fig:3m212}, $K$ can be deformed into $3_1$ by a $\Delta$-move. 
Therefore, we have $u^{\Delta}(K) = d_G^{\Delta}(K, O) \leq d_G^{\Delta}(K, 3_1) + d_G^{\Delta}(3_1, O) = 1 + 1 = 2$. 
By Proposition \ref{prop:a2}, we conclude that $u^{\Delta}(K) = 2$.
\begin{figure}[htbp]
 \centering
 \includegraphics[width=1\linewidth]{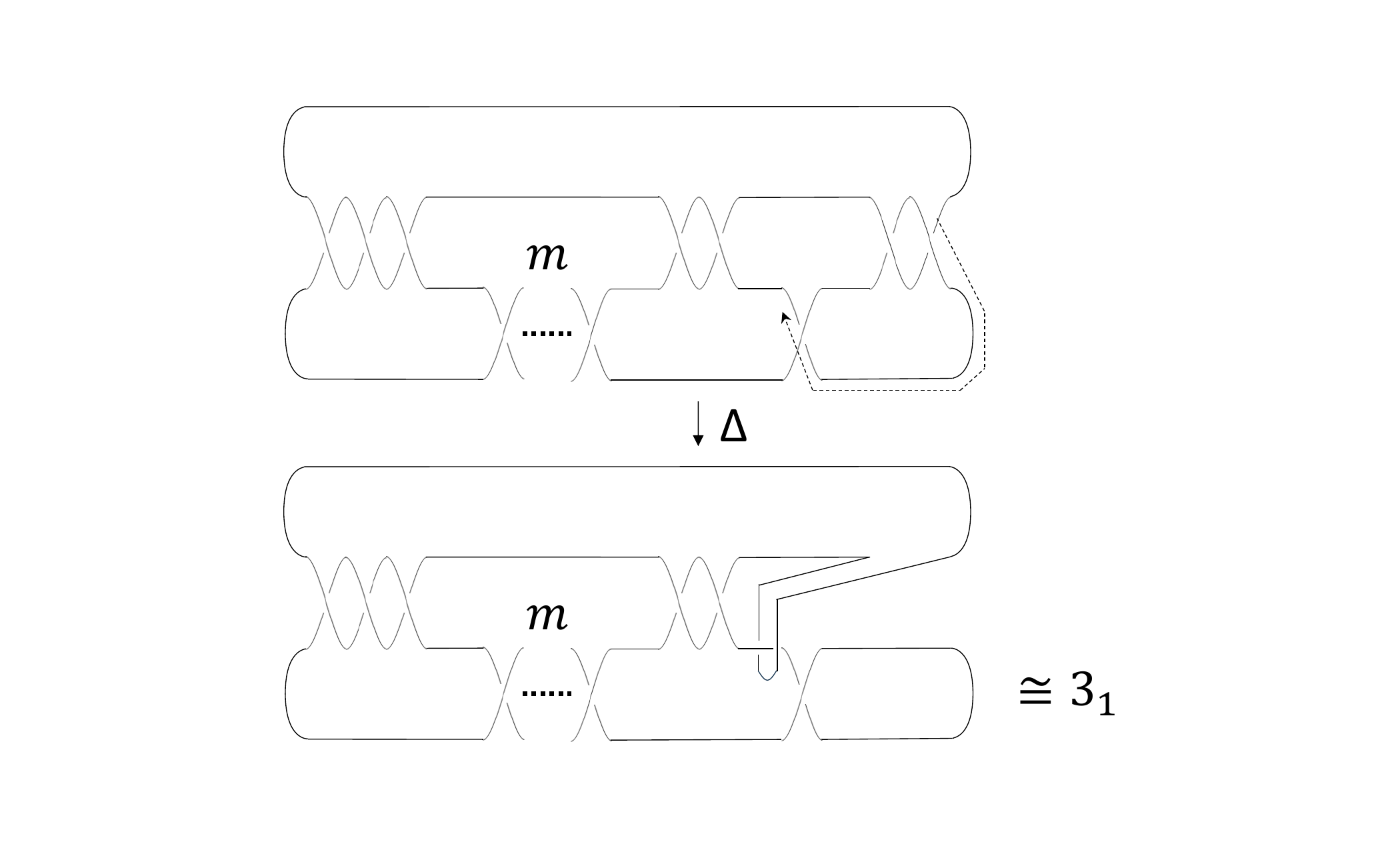}
 \caption{}
 \label{fig:3m212}
\end{figure}

\vspace{1em}
\section{Two-bridge knots with $u^{\Delta}(K)=1$}\label{sec5}

In this section, we investigate two-bridge knots with $u^{\Delta}(K) = 1$.

At the Topology Symposium in 1996, we considered two-bridge knots with $u^{\Delta}(K)=1$. In a slightly different form, we now propose the following conjecture.

\conj*
\begin{figure}[htbp]
 \centering
 \includegraphics[width=1\linewidth]{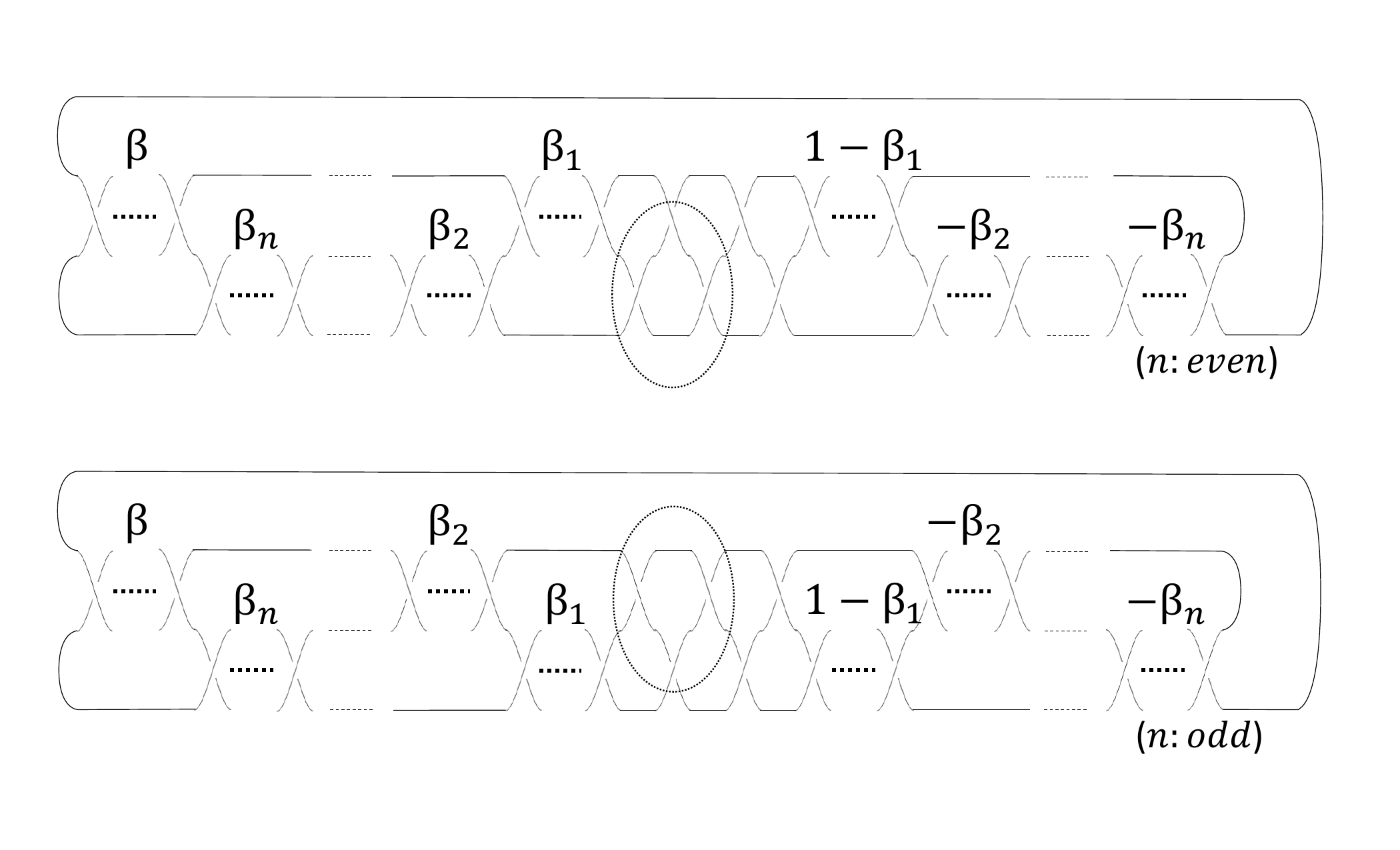}
 \caption{}
 \label{fig:deltaone}
\end{figure}

\deltaone*

\begin{proof}
If we perform a $\Delta$-move on the dotted circle in Figure \ref{fig:deltaone}, $K$ can be deformed into the trivial knot $O$. The proof is complete.
\end{proof}

\deltaten*

\begin{proof}
See Table 1. 
In this table, we present some examples, though other representations are also possible. The proof is complete.
\end{proof}

\begin{table}
\begin{center}
\caption{}
\begin{tabular}{|l|l|c|}
\hline
$K$ & Conway's normal form & Examples 
\\ \hline\hline
$3_1$ & $C(3)$ & 
$C(-2,0,\overbrace{1,1,1,1,1},1)$,
$C(3,-1,\overbrace{1,1,1,1,1},2)$
\\ \hline
$4_1$ & $C(2,2)$ & $C(-1, 0, \overbrace{1,1,1,1,1},1)$,
$C(2,-1,\overbrace{1,1,1,1,1},2)$
\\ \hline
$6_2$ & $C(3,1,2)$ & $C(-3,0,\overbrace{1,1,1,1,1},1)$,
$C(4,-1,\overbrace{1,1,1,1,1},2)$
\\ \hline
$6_3$ & $C(2,1,1,2)$ & $C(0,0,\overbrace{1,1,1,1,1},1)$,
$C(1,-1,\overbrace{1,1,1,1,1},2)$ 
\\ \hline
$7_6$ & $C(2,2,1,2)$ & $C(-4,0,\overbrace{1,1,1,1,1},1)$, 
$C(5,-1,\overbrace{1,1,1,1,1},2)$
\\ \hline
$7_7$ & $C(2,1,1,1,2)$ & $C(1,0,\overbrace{1,1,1,1,1},1)$,
$C(0,-1,\overbrace{1,1,1,1,1},2)$
\\ \hline
$8_{11}$ & $C(3,2,1,2)$ & $C(-5,0,\overbrace{1,1,1,1,1},1)$, 
$C(6,-1,\overbrace{1,1,1,1,1},2)$
\\ \hline
$8_{13}$ & $C(3,1,1,1,2)$ & $C(2,0,\overbrace{1,1,1,1,1},1)$, 
$C(-1,-1,\overbrace{1,1,1,1,1},2)$
\\ \hline
$9_{12}$ & $C(4,2,1,2)$ & $C(-6,0,\overbrace{1,1,1,1,1},1)$, 
$C(7,-1,\overbrace{1,1,1,1,1},2)$
\\ \hline
$9_{14}$ & $C(4,1,1,1,2)$ & $C(3,0,\overbrace{1,1,1,1,1},1)$, 
$C(-2,-1,\overbrace{1,1,1,1,1},2)$
\\ \hline
$10_7$ & $C(5,2,1,2)$ & $C(-7,0,\overbrace{1,1,1,1,1},1)$, 
$C(8,-1,\overbrace{1,1,1,1,1},2)$
\\ \hline
$10_{10}$ & $C(5,1,1,1,2)$ & $C(4,0,\overbrace{1,1,1,1,1},1)$, 
$C(-3,-1,\overbrace{1,1,1,1,1},2)$
\\ \hline
$10_{19}$ & $C(4,1,1,1,3)$ & $C(1,3,\overbrace{1,1,1,1,1},-2)$,
$C(2,-2,\overbrace{1,1,1,1,1},3)$
\\ \hline
$10_{32}$ & $C(3,1,1,1,2,2)$ & $C(-2,3,\overbrace{1,1,1,1,1},-2)$,
$C(-1,-2, \overbrace{1,1,1,1,1},3)$
\\ \hline
\end{tabular}
\end{center}
\end{table}

\vspace{1em}
\section{$\Delta$-Gordian distance for two-bridge knots}\label{sec6}

In this section, we investigate the $\Delta$-Gordian distances of the two-bridge knots previously discussed.

\subsection{Type $C(2\beta_1, 2\beta_2)$}\label{subsec61}

\deltadelta*

\begin{proof}
By Corollary \ref{cor:a1a2}(1), then $a_2(C(\alpha_1, \alpha_2)) = - \frac{1}{4} \alpha_1 \alpha_2$ and $a_2(C(\alpha_1', \alpha_2')) = - \frac{1}{4} \alpha_1' \alpha_2'$.

On the other hand, if we perform $\frac{1}{2} \alpha_1$ times $\Delta$-moves (Claim \ref{claim:technique}), we have $d_G^{\Delta}(C(\alpha_1, \alpha_2), C(\alpha_1, \alpha_2-2)) \leq \frac{1}{2} \alpha_1$. 

By repeating the same computation $\frac{1}{2} (\alpha_2 - \alpha_2')$ times, we have
\begin{gather*}
d_G^{\Delta}(C(\alpha_1, \alpha_2), C(\alpha_1, \alpha_2-2)) \leq \frac{1}{2} \alpha_1 \quad, ... , \\
d_G^{\Delta}(C(\alpha_1, \alpha_2'+2), C(\alpha_1, \alpha_2')) \leq \frac{1}{2} \alpha_1 . 
\end{gather*}

By summing up, we obtain 
\begin{align*}
&d_G^{\Delta}(C(\alpha_1, \alpha_2), C(\alpha_1, \alpha_2')) \\
& \leq d_G^{\Delta}(C(\alpha_1, \alpha_2), C(\alpha_1, \alpha_2-2)) + ... + d_G^{\Delta}(C(\alpha_1, \alpha_2'+2), C(\alpha_1, \alpha_2'))  \\
& \leq \frac{1}{4} \alpha_1(\alpha_2 - \alpha_2' ).
\end{align*}

By the same argument, we have 
\begin{gather*}
d_G^{\Delta}(C(\alpha_1, \alpha_2'), C(\alpha_1', \alpha_2')) \leq \frac{1}{4} \alpha_2'(\alpha_1 - \alpha_1' ) .
\end{gather*}

Therefore, we have
\begin{align*}
d_G^{\Delta}(C(\alpha_1, \alpha_2), C(\alpha_1', \alpha_2')) & \leq d_G^{\Delta}(C(\alpha_1, \alpha_2), C(\alpha_1, \alpha_2')) + d_G^{\Delta}(C(\alpha_1, \alpha_2'), C(\alpha_1', \alpha_2')) \\ & \leq \frac{1}{4} \alpha_1( \alpha_2 - \alpha_2' ) +  \frac{1}{4} \alpha_2' ( \alpha_1 - \alpha_1' ) = \frac{1}{4} ( \alpha_1 \alpha_2 - \alpha_1' \alpha_2' )
\\ & = |a_2(C(\alpha_1, \alpha_2)) - a_2(C(\alpha_1', \alpha_2'))|.
\end{align*}

By Proposition \ref{prop:a2}, we also have 
\begin{align*}
d_G^{\Delta}(C(\alpha_1, \alpha_2), C(\alpha_1', \alpha_2')) \geq |a_2(C(\alpha_1, \alpha_2)) - a_2(C(\alpha_1', \alpha_2'))|. 
\end{align*}

Thus, we conclude that 
\begin{align*}
d_G^{\Delta}(C(\alpha_1, \alpha_2), C(\alpha_1', \alpha_2')) & = |a_2(C(\alpha_1, \alpha_2)) - a_2(C(\alpha_1', \alpha_2'))| \\ 
& = \frac{1}{4} |\alpha_1 \alpha_2 - \alpha_1' \alpha_2'| = |u^{\Delta}(K) - u^{\Delta}(K^{'})|. 
\end{align*}

The proof is complete.
\end{proof}

In this case, as illustrated below, we can observe multiple shortest deformation paths from $C(\alpha_1,\alpha_2)$ to $C(\alpha_1',\alpha_2')$.
\begin{align*}
& C(\alpha_1,\alpha_2) &\overset{(\alpha_2/2){\Delta}'s}{\to} 
& C(\alpha_1-2,\alpha_2) &\overset{(\alpha_2/2){\Delta}'s}{\to} 
& ... &\overset{(\alpha_2/2){\Delta}'s}{\to}
& C(\alpha_1',\alpha_2) \\
& \downarrow {(\alpha_1/2){\Delta}'s}
&& \downarrow {\{(\alpha_1-2)/2\}{\Delta}'s}
&&
&& \downarrow {(\alpha_1'/2){\Delta}'s}
\\
& C(\alpha_1,\alpha_2-2) &  \overset{}{\to} \quad
& C(\alpha_1-2,\alpha_2-2) & \overset{}{\to} \quad 
& ... & \overset{}{\to} \quad 
& C(\alpha_1',\alpha_2-2) \\
& \downarrow {(\alpha_1/2){\Delta}'s}
&& \downarrow {\{(\alpha_1-2)/2\}{\Delta}'s}
&&
&& \downarrow {(\alpha_1'/2){\Delta}'s}
\\
& \quad \vdots 
&& \quad \vdots
&&& \quad \quad \quad \quad \vdots
\\
& \downarrow {(\alpha_1/2){\Delta}'s}
&& \downarrow {\{(\alpha_1-2)/2\}{\Delta}'s}
&&
&& \downarrow {(\alpha_1'/2){\Delta}'s}
\\
& C(\alpha_1,\alpha_2') &\overset{(\alpha_2'/2){\Delta}'s}{\to}
& C(\alpha_1-2,\alpha_2') &\overset{(\alpha_2'/2){\Delta}'s}{\to}
& ... &\overset{(\alpha_2'/2){\Delta}'s}{\to}
& C(\alpha_1',\alpha_2') 
\end{align*}

By Theorem \ref{thm:deltadelta}, we obtain Table 2. In this table, we do not consider $\Delta$-Gordian distances between the mirror images of these knots.
\begin{table}[b]
\begin{center}
\caption{}
\begin{tabular}{|ll||l|l|l|l|l|l|}
\hline
$K$ & & $4_1$ & $6_1$ & $8_1$ & $8_3$ & $10_1$ & $10_3$ \\
& & $(2,2)$ & $C(4,2)$ & $C(6,2)$ & $C(4,4)$ & $C(8,2)$ & $C(6,4)$ \\
\hline \hline
$4_1$ & $C(2,2)$ & $0$ & $1$ & $2$ & $3$ & $3$ & $5$  \\
\hline
$6_1$ & $C(4,2)$ && $0$ & $1$ & $2$ & $2$ &  $4$ \\ \hline
$8_1$ & $C(6,2)$ &&& $0$ & $1$ or $3$ & $1$ &  $3$ \\ \hline
$8_3$ & $C(4,4)$ &&&& $0$ & $2$ or $4$ &  $2$ \\ \hline
$10_1$ & $C(8,2)$ &&&&& $0$ &  $2$ or $4$ \\ \hline
$10_3$ & $C(6,4)$ &&&&&&  $0$ \\ \hline
\end{tabular}
\end{center}
\end{table}

In this case, as illustrated below, we can observe two shortest deformation paths from $C(6,4)$ to $C(4,2)$.
\begin{align*}
&&& C(6,4) \cong 10_3 &\overset{2{\Delta}'s}{\to} \quad \quad
& C(4,4) \cong 8_3 &&
\\
&&& \downarrow {3{\Delta}'s}
&& \downarrow {2{\Delta}'s} &&
\\
& C(8,2) \cong 10_1 & \overset{1{\Delta}}{\to} \quad \quad
& C(6,2) \cong 8_1 & \overset{1{\Delta}}{\to} \quad \quad
& C(4,2) \cong 6_1 & \overset{1{\Delta}}{\to} \quad \quad
& C(2,2) \cong 4_1
\end{align*}

\subsection{Type $C(2, m, 5)$}\label{subsec62}

\begin{example}\label{}
Let $K=C(2, m, 5)$ and $K^{'}=C(2, m', 5)$ be two-bridge knots, where $m$ and $m'$ are positive odd integers.
Then, we have
$d_G^{\Delta}(K,K^{'}) = \frac{1}{2} |m - m'| 
\quad \big(= |a_2(K) - a_2(K^{'})|\big)$. 
\end{example}

See Example \ref{ex:a1a2a3}. Then, we have $a_2(K) = \frac{1}{2}(1-m)$ and $d_G^{\Delta}(C(2, m, 5), C(2, m-2, 5))=1$. Therefore, we have $|a_2(K) -a_2(K^{'})| = \frac{1}{2} |m - m'|$ and $d_G^{\Delta}(K,K^{'}) \leq \frac{1}{2} |m - m'|$.
By Proposition \ref{prop:a2}, we conclude that $d_G^{\Delta}(K,K^{'}) = \frac{1}{2} |m - m'|$.

\subsection{Type $C(3, m, 2, 1, 2)$}\label{subsec63}

\begin{example}\label{}
Let $K=C(3, m, 2, 1, 2)$ and $K^{'}=C(3, m', 2, 1, 2)$ be two-bridge knots, where $m$ and $m'$ are positive even integers and $K \not\cong K^{'}$.
Then, we have $d_G^{\Delta}(K,K^{'})= 2$.
\end{example}

In this case,
$|u^{\Delta}(K) - u^{\Delta}(K^{'})| < d_G^{\Delta}(K,K^{'}) < u^{\Delta}(K) + u^{\Delta}(K^{'})$.

See Example \ref{ex:3m212} and Figure \ref{fig:3m212}. $K$ can be deformed into $3_1$ by a $\Delta$-move. 
Therefore, we have $d_G^{\Delta}(K,K^{'}) \leq d_G^{\Delta}(K,3_1) + d_G^{\Delta}(3_1,K^{'}) = 1 + 1 = 2$. 
By Proposition \ref{prop:a2}, we conclude that $d_G^{\Delta}(K,K^{'}) = 2$.

\subsection{Type $C(\beta, \beta_n, ... , \beta_2, \beta_1, 1, 1, 1, 1, 1, 1-\beta_1, -\beta_2, ... , -\beta_n)$}\label{subsec64}

\begin{example}\label{}
Let $K=C(\beta, \beta_n, ... , \beta_2, \beta_1, 1, 1, 1, 1, 1, 1-\beta_1, -\beta_2, ... , -\beta_n)$ and $K^{'}=C(\beta', \beta_n', ... , \beta_2', \beta_1', 1, 1, 1, 1, 1, 1-\beta_1', -\beta_2', ... , -\beta_n')$ be nontrivial two-bridge knots, where $K \not\cong K^{'}$.
Then, we have
$d_G^{\Delta}(K,K^{'})= 2 
\quad \big(= u^{\Delta}(K) + u^{\Delta}(K^{'})\big)$.
\end{example}

By Theorem \ref{thm:deltaone}, we have $u^{\Delta}(K) = 1$ and $u^{\Delta}(K^{'}) = 1$.
Therefore, we have $d_G^{\Delta}(K,K^{'}) \leq d_G^{\Delta}(K,O) + d_G^{\Delta}(O,K^{'}) = u^{\Delta}(K) + u^{\Delta}(K^{'}) = 1 + 1 = 2$.
By Proposition \ref{prop:a2}, we conclude that $d_G^{\Delta}(K,K^{'})= 2$.

\section*{Acknowledgments}
The author would like to thank Professor Makoto Sakuma for his valuable advice and continuous support.

He also thanks Professors Yasutaka Nakanishi, Hitoshi Murakami, and Yoshiaki Uchida for their helpful comments and suggestions.

Finally, he would like to thank his family for their support and understanding throughout the course of this work.




\end{document}